\documentclass{tran-l}
\usepackage{fourier}
\copyrightinfo{}{}

\newif\ifnmd
\InputIfFileExists{user-is-nathan-dunfield}{\nmdtrue}{\nmdfalse}

\usepackage[colorlinks=true, linkcolor=blue, citecolor=blue, urlcolor=blue]{hyperref}
\usepackage{graphicx}
\usepackage[all,import,rotate]{xy}
\usepackage[margin=10pt,font=small,labelfont=bf]{caption}
\usepackage{ragged2e}


\setcounter{topnumber}{9}
\setcounter{bottomnumber}{9}
\setcounter{totalnumber}{20}
\setcounter{dbltopnumber}{9}

\def\RCS$#1: #2 ${\expandafter\def\csname RCS#1\endcsname{#2}}
\RCS$Revision: 136 $
\RCS$Date: 2011-07-03 08:06:50 -0500 (Sun, 03 Jul 2011) $

\def\MR#1{\href{http://www.ams.org/mathscinet-getitem?mr=#1}{MR#1}}
\def\checkMR MR#1#2#3 #4\relax%
  {\ifx#1M%
     \ifx#2R\MR{#3}\else\MR{#1#2#3}\fi
   \else
     \MR{#1#2#3}%
   \fi}

\theoremstyle{plain}
\swapnumbers
\newtheorem{theorem}[subsection]{Theorem}

\newtheorem{lemma}[subsection]{Lemma}

\newtheorem{proposition}[subsection]{Proposition}
\newtheorem{claim}[subsection]{Claim}

\newtheorem{mainonebackwards}{Theorem}

\newtheorem{maintwobackwards}{Theorem}

\theoremstyle{definition}
\newtheorem{definition}[subsection]{Definition}

\theoremstyle{remark}
\newtheorem{remark}[subsection]{Remark}
\newtheorem{example}[subsection]{Example}



\makeatletter
  \let\c@figure=\c@subsection
  
  \let\c@table=\c@subsection
  
  \let\c@equation=\c@subsection
  
\makeatother


\newcommand{\hyp}{\nobreakdash-\hspace{0pt}}
\newcommand{\3}[1]{3\hyp}


\newenvironment{xyoverpic}[3]
{%
\begin{xy}
\xyimport#1{\includegraphics[#2]{#3}}
}{\end{xy}}

\newenvironment{cxyoverpic}[3]
{%
\begin{center}
\centering\leavevmode\small
\begin{xyoverpic}{#1}{#2}{#3}
}{\end{xyoverpic}
\end{center}}


\def\BZ{\mathbb Z}
\def\BP{\mathbb P}
\def\BQ{\mathbb Q}
\def\BR{\mathbb R}
\def\T{\mathcal T}
\newcommand{\Q}{{\mathbb Q}}
\newcommand{\R}{{\mathbb R}}
\newcommand{\C}{{\mathbb C}}
\newcommand{\Z}{{\mathbb Z}}
\newcommand{\N}{{\mathbb N}}
\renewcommand{\H}{{\mathbb H}}

\newcommand{\maps}{\colon\thinspace}
\newcommand{\tr}{ {\mathrm{tr}} }

\newcommand{\PSL}[2]{\mathrm{PSL}_{#1} #2}

\newcommand{\mtext}[1]{\quad\mbox{#1}\quad}
\DeclareMathOperator{\rank}{rank}

\newcommand{\pair}[1]{\left\langle #1 \right\rangle}
\newcommand{\setdef}[2]{{  \left\{  {#1}  \ \left| \   {#2} \right. \right\} }}


\newcommand{\kernoverline}[3]{{\mkern #1mu\overline{\mkern-#1mu #2\mkern-#3mu}\mkern#3mu}}


\newcommand{\Coo}{\C^{* \bullet}}
\newcommand{\Co}{\C^{\bullet}}
\newcommand{\Vbar}{\kernoverline{2}{V}{1}}
\newcommand{\Wbar}{\kernoverline{2}{W}{1}}
\newcommand{\Ubar}{\kernoverline{2}{U}{1}}
\newcommand{\dd}{d}

\newcommand{\bs}[1]{\mathrm{bs}(#1)}
\newcommand{\Dtil}{\widetilde{D}}
\newcommand{\Ctil}{\widetilde{C}}
\newcommand{\Wtil}{\widetilde{W}}
\newcommand{\Mtil}{\widetilde{M}}
\newcommand{\dtil}{\widetilde{d}}

\newcommand{\Ntil}{\widetilde{N}}
\newcommand{\Ndot}{N^\bullet}
\newcommand{\Ntildot}{\widetilde{N}^\bullet}
\newcommand{\Nbar}{\kernoverline{4}{N}{2}}
\newcommand{\Hbar}{\kernoverline{2}{\H}{2}}
\newcommand{\Xbar}{\kernoverline{3}{X}{1}}
\newcommand{\Tbar}{\kernoverline{2}{\T}{1}}
\newcommand{\Tbarprime}{\kernoverline{2}{\T}{1} \mkern 1mu '}
\renewcommand{\v}{\mathfrak{v}}
\newcommand{\n}{\mathfrak{n}}
\newcommand{\Nv}{N_{\v}}
\newcommand{\Bv}{B_{\v}}
\newcommand{\Deltav}{\Delta_\v^3}
\newcommand{\TN}{\mathfrak{T}_N}
\newcommand{\TS}{\mathfrak{T}_S}

\begin{document}

\title[Incompressibility criteria for spun-normal surfaces]{Incompressibility criteria for \\ spun-normal surfaces}

\author{Nathan M. Dunfield}
\address{ Dept. of Math., MC-382 \\
          University of Illinois \\
          1409 W. Green Street \\
          Urbana, IL 61801, USA
}
\email{nathan@dunfield.info}
\urladdr{http://dunfield.info}

\author{Stavros Garoufalidis}
\address{School of Mathematics \\
         Georgia Institute of Technology \\
         Atlanta, GA 30332-0160, USA}

\email{stavros@math.gatech.edu}
\urladdr{http://www.math.gatech.edu/~stavros}

\keywords{boundary slopes, normal surface, character variety, Jones
  slopes, 2-fusion knot, alternating knots}

\subjclass[2010]{Primary 57N10; Secondary 57M25, 57M27}

\begin{abstract}
  We give a simple sufficient condition for a spun-normal surface in
  an ideal triangulation to be incompressible, namely that it is a
  vertex surface with non-empty boundary which has a quadrilateral in
  each tetrahedron.  While this condition is far from being
  necessary, it is powerful enough to give two new results: the
  existence of alternating knots with non-integer boundary slopes, and a
  proof of the Slope Conjecture for a large class of 2-fusion knots.  

  While the condition and conclusion are purely topological, the proof
  uses the Culler-Shalen theory of essential surfaces arising from
  ideal points of the character variety, as reinterpreted by Thurston
  and Yoshida.  The criterion itself comes from the work of Kabaya,
  which we place into the language of normal surface theory.  This
  allows the criterion to be easily applied, and gives the framework
  for proving that the surface is incompressible.

  We also explore which spun-normal surfaces arise from ideal points
  of the deformation variety.  In particular, we give an example where
  no vertex or fundamental surface arises in this way.
\end{abstract}

\maketitle

\setcounter{tocdepth}{1}
\tableofcontents

\section{Introduction}

\label{sec.intro}
 
Let $M$ be a compact oriented 3-manifold whose boundary is a torus.
A properly embedded surface $S$ in $M$ is called \emph{essential} if
it is incompressible, boundary\hyp incompressible, and not
boundary-parallel.  If $S$ has boundary, this consists of
pairwise-isotopic essential simple closed curves on the torus $\partial M$;
the unoriented isotopy class of these curves is the \emph{boundary
  slope} of $S$.  Such slopes can be parameterized by the
corresponding primitive homology class in $H_1(\partial M; \Z)/(\pm1)$; if a
basis of $H_1(\partial M; \Z)$ is fixed, slopes can also be recorded as
elements of $\BQ \cup\{\infty \}$. 

Our focus here is on the set $\bs M$ of all boundary slopes of
essential surfaces in $M$, which is finite by a fundamental result of
Hatcher~\cite{H}.  This is an important invariant of $M$, for instance
playing a key role in the study of exceptional Dehn filling.  Building
on Haken's fundamental contributions \cite{Hk}, Jaco and Sedgwick
\cite{JS} used normal surface theory to give a general algorithm for
computing $\bs M$.  As with most normal-surface algorithms, this
method seems impractical even for modest-sized examples (however, some
important progress has been made on this by \cite{BRT}).  For certain
special cases, such as exteriors of Montesinos knots, fast algorithms
do exist \cite{HT, HO, DunfieldMontesinos}, and additionally 
character-variety techniques
can sometimes be used to find boundary slopes \cite{CCGLS, Culler}.
However, there remain quite small examples where $\bs M$ is unknown,
e.g.~for the exteriors of certain 9-crossing knots in $S^3$.

Here, we introduce a simple sufficient condition that ensures that a
normal surface is essential.  While our condition is far from being
necessary, it is powerful enough to give two new results: the
existence of alternating knots with non-integer boundary slopes, and
a proof of the Slope Conjecture for all 2-fusion knots.  Along with
\cite{BRT}, these are the first results that come via applying
directly normal surface algorithms, which have been much studied for
their inherent interest in the past 50 years.
 
We work in the context of an ideal triangulation $\T$ of $M$ and
Thurston's corresponding theory of spun-normal surfaces (throughout,
see Section \ref{sec-spun-normal} for definitions).  In normal surface
theory, \emph{vertex surfaces} corresponding to the vertices of the
projectivized space of normal surfaces play a key role.  Our basic
result is

\begin{theorem}\label{thm-one-cusp}
  Suppose $S$ is a vertex spun-normal surface in $\T$ with non-trivial
  boundary.  If $S$ has a quadrilateral in every tetrahedron of $\T$,
  then $S$ is essential.   
\end{theorem}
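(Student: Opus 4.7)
The plan is to realize $S$ as the essential surface produced, via the Culler--Shalen--Thurston--Yoshida construction, from an ideal point of a suitable component of the deformation variety of $\T$. Since any surface arising this way is automatically incompressible, boundary\hyp incompressible, and not boundary\hyp parallel, the bulk of the work is not in applying Culler--Shalen but in manufacturing the ideal point and matching its associated surface with the given $S$.

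The first step is to encode $S$ as tropical data on the deformation variety, following Kabaya. The quadrilateral coordinates of $S$ assign to each tetrahedron $\Delta_i$ a choice of quad type together with a nonnegative weight $w_i$; the hypothesis that $S$ has a quadrilateral in every tetrahedron forces every $w_i$ to be strictly positive, so the data specify, uniformly across $\T$, a direction in which one cross\hyp ratio of each tetrahedron degenerates to $0$, $1$, or $\infty$. After passing to logarithmic shape coordinates, Thurston's gluing and completeness equations become linear, and Kabaya's criterion --- rephrased in spun\hyp normal language --- is precisely the statement that the quadrilateral matching and boundary\hyp compatibility equations for $S$ coincide with the linear conditions cutting out an admissible tangent ray at infinity on the deformation variety.

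Next, I would use the vertex hypothesis to upgrade this tangent ray to an honest ideal point. Vertex spun\hyp normal surfaces correspond to extreme rays of the projectivized cone of solutions to the matching equations, and hence to one\hyp dimensional faces of the tangent cone at infinity; a curve\hyp selection / formal Puiseux\hyp arc argument then produces an arc in some irreducible component of the deformation variety whose leading\hyp order asymptotics are governed by the quad coordinates of $S$. The non\hyp trivial boundary hypothesis ensures that the peripheral holonomy is unbounded along this arc, so it genuinely limits to an ideal point rather than to an interior smooth point.

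Finally, at the ideal point so constructed, the Culler--Shalen machinery yields an action of $\pi_1(M)$ on a simplicial tree with no global fixed point, and hence an essential surface $S'$ in $M$. The Thurston--Yoshida reinterpretation identifies the quadrilateral coordinates of $S'$ with the orders of vanishing of the shape parameters along the arc; by construction these agree with the quad coordinates of $S$, and since a spun\hyp normal surface is determined up to isotopy by its quad coordinates, $S'=S$. I expect the main obstacle to be the passage from a formal tangent direction to a genuine ideal point lying on an irreducible component whose associated surface is $S$ itself, rather than a proper subsurface or a surface sharing only part of the same quadrilateral data --- this is where the vertex hypothesis must be used most carefully.
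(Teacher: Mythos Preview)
Your outline correctly identifies that one must produce an ideal point of $D(\T)$ whose associated spun-normal surface has the same $Q$-coordinates as $S$; this is essentially the content of Lemma~\ref{lem-comes-from-ideal} and Proposition~\ref{prop-Kabaya-key} in the paper, and your curve-selection sketch is a reasonable substitute for the paper's inverse-function-theorem argument.

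The genuine gap is in your final paragraph. It is \emph{not} true that the surface arising from an ideal point via Culler--Shalen is automatically essential. What Culler--Shalen (and Theorem~\ref{thm-ideal-pt-to-surface} here) actually give is that the spun-normal surface $S(\xi)$ dual to the tree can be \emph{reduced}---by compressions, $\partial$-compressions, and discarding trivial pieces---to a nonempty essential surface $S'$ with the same boundary slope. The Thurston--Yoshida identification matches the orders of vanishing with the $Q$-coordinates of $S(\xi)$, not with those of the essential reduction $S'$; there is no reason a priori for $S'$ to be spun-normal at all, let alone to have the same quad coordinates. So your sentence ``by construction these agree with the quad coordinates of $S$, and since a spun-normal surface is determined up to isotopy by its quad coordinates, $S'=S$'' conflates $S(\xi)$ with its reduction and does not close the argument. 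You did flag a worry about ending up with ``a proper subsurface,'' but you located it at the wrong stage: the danger is not in constructing the ideal point, it is that the Culler--Shalen output may be a strict compression of $S$.

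The paper supplies exactly the missing step as Lemma~\ref{lem-act-essen}, via a barrier argument: if $S$ had a genuine compression, compress once to get $S_1$ disjoint from $S$, reduce and then normalize in the complement of $S$ (using $S$ as a barrier) to a normal surface $S_3$ still disjoint from $S$, and spin it to a spun-normal $S''$. Disjointness forces $S''$ to lie in the same quad-type cone $C(\T,Q)$; the vertex hypothesis, in the form $\dim L(\T,Q)=1$, then forces $S''$ and $S$ to have identical $Q$-coordinates, contradicting that a genuine compression occurred. This is where the vertex condition is really used, and it is independent of the ideal-point construction.
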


While this statement is purely topological, the proof uses the
Culler-Shalen theory of essential surfaces arising from ideal points
of the character variety \cite{CS, CGLS}, as reinterpreted by Thurston
\cite{T} and Yoshida \cite{Y} in the context of the deformation
variety defined by the hyperbolic gluing equations for $\T$.
Theorem~\ref{thm-one-cusp} is a strengthening of a result of Kabaya
\cite{Ka}, who shows that, with the same hypotheses, that the boundary
slope of $S$ is in $\bs{M}$.  Our contribution to
Theorem~\ref{thm-one-cusp} is restating Kabaya's work in the language
of normal surface theory, allowing it to be easily applied, and
showing that $S$ is itself incompressible.

\subsection{Alternating knots}

Our application of Theorem~\ref{thm-one-cusp} concerns the
boundary slopes of (the exteriors of) alternating knots in $S^3$.  In
the natural meridian-longitude basis for $H_1(\partial M)$, Hatcher
and Oertel \cite{HO} showed that the boundary slopes of alternating
Montesinos knots were always even integers, generalizing what Hatcher
and Thurston had found for 2-bridge knots \cite{HT}.  Hatcher and
Oertel asked whether this was true for \emph{all} alternating knots.
We use Theorem~\ref{thm-one-cusp} to settle this 20 year-old question:
\begin{theorem}\label{thm-alt-nonint}
  There are alternating knots with nonintegral boundary slopes.  In
  particular, the knot $10_{79}$ has boundary slopes $10/3$ and
  $-10/3$.
\end{theorem}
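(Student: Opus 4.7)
The plan is to apply Theorem~\ref{thm-one-cusp} directly to an ideal triangulation of the exterior $M$ of $10_{79}$. First, I would fix some ideal triangulation $\T$ of $M$ — for instance the canonical one produced by SnapPy — and enumerate all vertex spun-normal surfaces of $\T$ using standard normal-surface software such as Regina. Among these I would search for a surface $S$ satisfying the two hypotheses of Theorem~\ref{thm-one-cusp}: $S$ has non-empty boundary, and its quadrilateral vector is nonzero in every tetrahedron of $\T$.

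Once such an $S$ has been located, the next step is to compute its boundary slope in the standard meridian--longitude basis for $H_1(\partial M;\Z)$. For a spun-normal surface this is a direct linear calculation from the quadrilateral coordinates of $S$, together with the combinatorial data of the cusp cross-section induced by $\T$. The claim to verify is that this slope equals $10/3$. Theorem~\ref{thm-one-cusp} then immediately gives that $S$ is essential, and hence $10/3 \in \bs{M}$. To produce the second slope $-10/3$, one can either appeal to the fact that $10_{79}$ is amphichiral, so $\bs{M}$ is closed under $p/q \mapsto -p/q$, or perform an analogous search and exhibit a second vertex surface realizing slope $-10/3$ directly. Since $10_{79}$ is alternating (visibly so from its standard diagram) and $10/3 \notin \Z$, this settles the Hatcher--Oertel question in the negative.

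The main difficulty is not conceptual but computational. Theorem~\ref{thm-one-cusp} requires a vertex surface that carries a quadrilateral in \emph{every} tetrahedron of $\T$, and there is no a priori reason such a surface should exist for a generic triangulation; one might have to try several triangulations of $M$ before finding a cooperative one. Likewise, enumerating the vertex spun-normal surfaces, checking the quadrilateral condition, and computing the boundary slope are finite but tedious tasks that in practice must be delegated to the computer. Given a vertex surface with the right boundary slope, however, the conclusion is immediate from Theorem~\ref{thm-one-cusp}, so the content of the argument lies entirely in exhibiting the surface.
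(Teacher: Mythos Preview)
Your proposal is correct and follows essentially the same approach as the paper: exhibit an ideal triangulation $\T$ of the exterior of $10_{79}$, find a vertex spun-normal surface with a quad in every tetrahedron and boundary slope $10/3$, invoke Theorem~\ref{thm-one-cusp}, and then use amphichirality for $-10/3$. The paper carries this out with a specific $14$-tetrahedron triangulation (not the canonical one) and checks the vertex condition via the equivalent criterion $\dim L(\T,Q)=1$ from Theorem~\ref{thm-one-cusp-tech} using SnapPy/Sage rather than by enumerating all vertex surfaces in Regina, but these are implementation details rather than differences in strategy.
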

\noindent
Many additional such examples are listed in Table~\ref{table-alt}.

\subsection{Dehn filling}

The technique of Kabaya that underlies Theorem~\ref{thm-one-cusp} can
be generalized to manifolds that arise from Dehn filling all but one
boundary component of a more complicated manifold.  Specifically, in
the language of Section~\ref{sec-filling} we show: 
\begin{theorem}\label{thm-filling}
  Let $W$ be a compact oriented \3-manifold whose boundary consists of
  tori $T_0, T_1, \ldots , T_n$.  Let $S$ be a spun-normal surface in an
  ideal triangulation $\T$ of $W$, with nonempty boundary slope $\gamma_k$
  on each $T_k$. Suppose that $S$ has a quadrilateral in every
  tetrahedra of $\T$, and is a vertex surface for the relative normal
  surface space corresponding to $(\, \cdot \,, \gamma_1,\ldots,\gamma_n)$.  Then $\gamma_0$ is
  a boundary slope of $W(\cdot, \gamma_1,\ldots, \gamma_n)$.
\end{theorem}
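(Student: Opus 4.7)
The plan is to adapt the proof of Theorem~\ref{thm-one-cusp} to the multi-cusped setting by working inside a \emph{relative} version of the deformation variety associated to $\T$.

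First, I would introduce the relative deformation variety $D(\T;\gamma_1,\ldots,\gamma_n)$ as the sub-variety of the usual Thurston--Yoshida deformation variety of $\T$ cut out by the hyperbolic gluing equations together with the additional constraints that the complex holonomy along $\gamma_k$ is trivial for each $k=1,\ldots,n$.  Geometrically this parameterizes shape-parameter solutions that descend to hyperbolic structures on the filled manifold $W(\cdot,\gamma_1,\ldots,\gamma_n)$, so the $T_0$-holonomy defines a meromorphic function on $D(\T;\gamma_1,\ldots,\gamma_n)$ whose valuations at ideal points encode boundary slopes on $T_0$ in the filled manifold.

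Next, I would match the combinatorial hypotheses on $S$ to an ideal point of $D(\T;\gamma_1,\ldots,\gamma_n)$.  The relative normal surface space in the statement is the polytope cut from the full spun-normal cone by the linear conditions that the boundary-weight on $T_k$ realize the slope $\gamma_k$ for $k\ge 1$.  Following Kabaya's dictionary, as reinterpreted in the one-cusp proof of Theorem~\ref{thm-one-cusp}, the hypothesis that $S$ is a vertex of this relative polytope, has nonempty boundary, and has a quadrilateral in every tetrahedron should pick out an ideal point $\xi$ of $D(\T;\gamma_1,\ldots,\gamma_n)$ at which the valuations of the shape parameters agree, up to scale, with the quad coordinates of $S$.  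Feeding $\xi$ into the Thurston--Yoshida--Culler--Shalen machinery applied to $W(\cdot,\gamma_1,\ldots,\gamma_n)$, one obtains an ideal point of an irreducible component of the character variety of the filled manifold at which the trace function on a $T_0$-peripheral element is unbounded; the associated tree action then produces an essential surface in $W(\cdot,\gamma_1,\ldots,\gamma_n)$ whose $T_0$-boundary slope is controlled by the valuation data at $\xi$ and is therefore equal to $\gamma_0$.

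The main obstacle is verifying that the linear constraints defining the relative normal surface polytope are precisely dual to the holonomy constraints cutting out $D(\T;\gamma_1,\ldots,\gamma_n)$ from the full deformation variety, and that Kabaya's tropicalization argument --- which turns a vertex surface with a quad in every tetrahedron into an ideal point --- survives intact on this cut-down variety.  Concretely, one must check that imposing the slopes $\gamma_1,\ldots,\gamma_n$ as boundary data on the normal surface side is the tropical shadow of imposing the corresponding unipotent conditions on the holonomy side, so that a relative vertex does indeed detect a genuine ideal point of $D(\T;\gamma_1,\ldots,\gamma_n)$ rather than merely of the ambient variety.  Once this bookkeeping is in place, the Culler--Shalen portion of the argument works verbatim on the filled manifold and the theorem follows.
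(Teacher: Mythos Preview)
Your overall strategy matches the paper's: define the relative deformation variety $D(\T,\{\gamma_k\})$ by adjoining the holonomy conditions $h(\gamma_k)=1$ to the edge equations, observe that this is again a variety of gluing-equation type with at most $n-1$ defining equations (the $n-b-1$ independent edge equations plus the $b$ holonomy conditions), check that its degeneration vectors are exactly the integer points of the relative cone $C(\T,\{\gamma_k\})$, and then apply Proposition~\ref{prop-Kabaya-key} to the totally positive vertex vector of $S$ to produce a genuine ideal point.  The ``bookkeeping'' you flag as the main obstacle is indeed routine once things are set up this way.

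There is, however, a genuine gap elsewhere.  You assert that $D(\T;\gamma_1,\ldots,\gamma_n)$ parameterizes shape solutions that descend to structures on the filled manifold, and then feed the ideal point directly into the Culler--Shalen machinery for $M=W(\cdot,\gamma_1,\ldots,\gamma_n)$.  But $h(\gamma_k)=1$ only forces $\rho(\gamma_k)$ to be trivial \emph{or parabolic}; you even call these ``unipotent conditions'' later, yet never explain how to pass from unipotent to trivial.  Without that, there is no map to $\Xbar(M)$ and the Culler--Shalen step cannot start.  The paper closes this gap using the hypothesis---which you do not otherwise use---that $S$ has nonempty boundary on \emph{every} $T_k$: for each $k\geq 1$, a dual curve $\alpha_k$ then has $h(\alpha_k)$ with a genuine pole or zero at the ideal point, so on a small punctured disc about it one can arrange $h(\alpha_k)\neq 1$.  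Since $\rho(\gamma_k)$ can be a nontrivial parabolic only when the entire peripheral holonomy at $T_k$ is trivial, this forces $\rho(\gamma_k)=1$ on that punctured disc, and only then does one obtain a curve of representations of $\pi_1(M)$ with an ideal point detecting $\gamma_0$.
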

This broadens the applicability of Kabaya's approach since any given
$M$ arises in infinitely many ways by Dehn filling, and thus a fixed
surface $S \subset M$ has many chances where Theorem~\ref{thm-filling}
might apply.

\subsection{The Slope Conjecture for 2-fusion knots}

Our application of Theorem~\ref{thm-filling} involves constructing a
boundary slope for every knot of a certain 2-parameter family.  We use
this to prove the Slope Conjecture of \cite{Ga1} in the case of
2-fusion knots.  This conjecture relates the degree of the Jones
polynomial of a knot and its parallels to boundary slopes of essential
surfaces in the knot complement.  To state our result, consider the
3-component link $L$ from Figure \ref{fig-2fusion-link}.
\begin{figure}
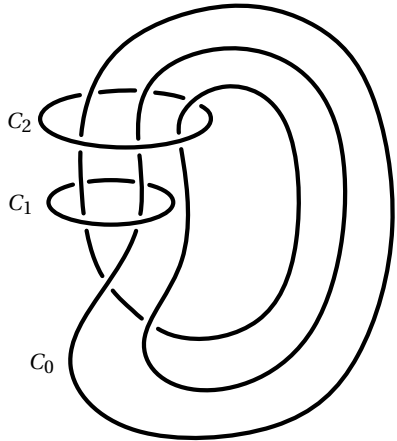

  \begin{cxyoverpic}{(169,206)}{scale=0.80}{images/2fusion-link}
    ,(1,150)*+!R{C_2}
    ,(5,111)*++!R{C_1}
    ,(15,36)*++!R{C_0}
  \end{cxyoverpic}
  \caption{The link $L$.}
  \label{fig-2fusion-link}
\end{figure}
For a pair of integers $(m_1,m_2) \in \BZ^2$, let $L(m_1,m_2)$ denote
the knot obtained by $(-1/m_1,-1/m_2)$ filling on the cusps $C_1$ and
$C_2$ of $L$, leaving the cusp $C_0$ unfilled. The 2-parameter family
of knots $L(m_1,m_2)$, together with the double-twist knots coming
from filling 2 cusps of the Borromean link, is the set of all knots of
fusion number at most $2$; see \cite{Ga2}.  The family $L(m_1,m_2)$
has some well-known members: $L(2,1)=L(-1,2)$ is the $(-2,3,7)$
pretzel knot, $L(-2,1)$ is the $5_2$ knot, $L(-1,3)$ is $k4_3$ which
was the focus of \cite{GL}, and $L(m_1,1)$ is the $(-2,3,3m_1+3)$
pretzel knot.
\begin{figure}[tpb]
\begin{center}
\includegraphics[height=1.25in]{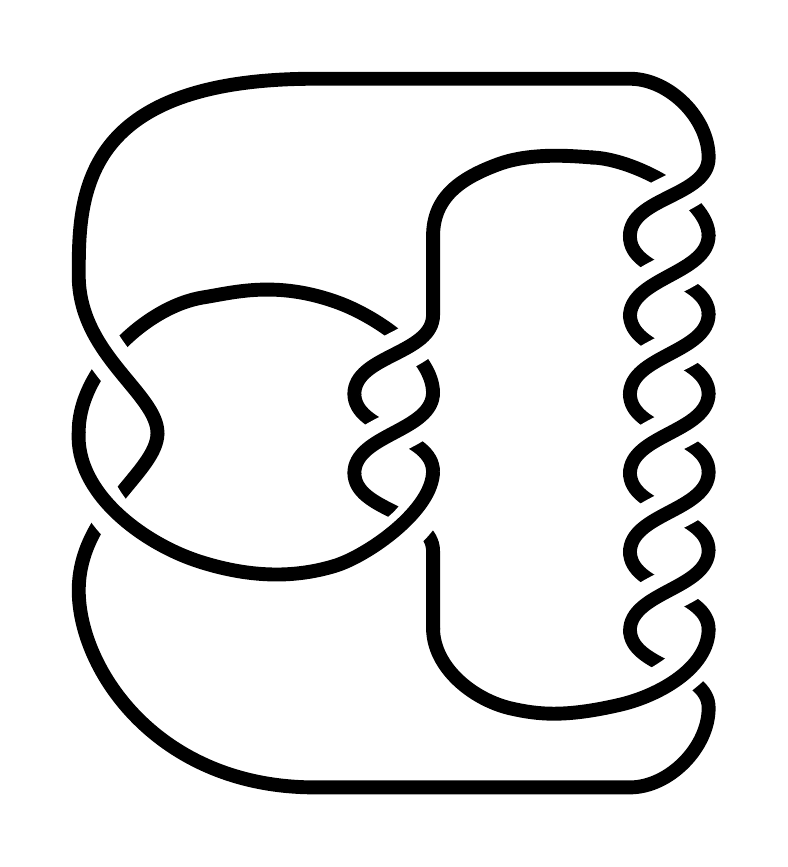} \hspace{1cm} %
\includegraphics[height=1.1in]{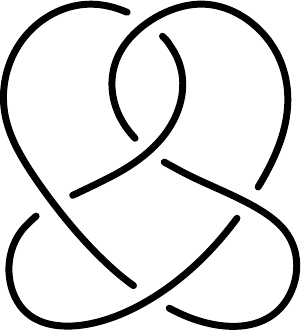} \hspace{1cm} %
\includegraphics[height=1.1in]{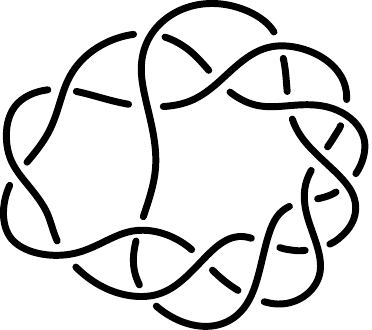}
\end{center}
\caption{The $(-2,3,7)$ pretzel, the $5_2$ knot, and the $k4_3$ 
knot.}\label{fig.2knots}
\end{figure}

Together with the results of \cite{Ga2}, the following confirms the
Slope Conjecture for 2-fusion knots in one of three major cases:
\begin{theorem}
\label{thm.Lm1m2}
For $m_1>1, m_2 >0$, one boundary slope of $L(m_1,m_2)$ is:
\begin{equation*}
3 (1 + m_1) + 9 m_2 + \frac{(m_1 -1)^2}{m_1 + m_2 -1}
\end{equation*}
\end{theorem}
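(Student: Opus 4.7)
The plan is to apply Theorem~\ref{thm-filling} with $W = S^3 \setminus L$, where $T_0 = \partial C_0$ is left unfilled and the cusps $C_1, C_2$ are Dehn filled with slopes $-1/m_1, -1/m_2$ respectively to yield $L(m_1,m_2)$. What is needed is a fixed ideal triangulation $\T$ of $W$ together with, for each pair $(m_1,m_2)$ with $m_1 > 1$ and $m_2 > 0$, a spun-normal surface $S_{m_1,m_2}$ in $\T$ such that: (i) $S_{m_1,m_2}$ has boundary slope $-1/m_1$ on $C_1$ and $-1/m_2$ on $C_2$; (ii) $S_{m_1,m_2}$ has a quadrilateral in every tetrahedron of $\T$; and (iii) $S_{m_1,m_2}$ is a vertex of the slice of the projectivized spun-normal surface space cut out by the boundary-slope constraints (i). Once such a family is produced, Theorem~\ref{thm-filling} immediately gives that the boundary slope of $S_{m_1,m_2}$ on $T_0$ is a boundary slope of $L(m_1,m_2)$, and the proof reduces to verifying that this slope equals the stated expression.

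To construct $\T$ and the surfaces $S_{m_1,m_2}$, I would start from an explicit ideal triangulation of $W = S^3 \setminus L$ (computed using SnapPy) and enumerate low-weight fundamental spun-normal surfaces (e.g.\ using Regina). The strategy is to identify finitely many fundamental spun-normal surfaces $F_1,\ldots,F_r$ whose boundary slopes on $C_1,C_2$ span enough of the slope lattice that a specific nonnegative integer combination $S_{m_1,m_2} = \sum c_i(m_1,m_2)\, F_i$, with $c_i$ affine linear in $(m_1,m_2)$, produces boundary slopes $(-1/m_1,-1/m_2)$ on $(C_1,C_2)$. For $m_1 > 1, m_2 > 0$ the coefficients $c_i$ are nonnegative, and the hope is that the resulting quad coordinates are strictly positive in every tetrahedron, giving (ii). The combination is naturally a vertex of the relative solution space (iii) provided its support and the boundary-slope constraints together pin it down uniquely; this is checked on $\T$ by linear algebra on the quadrilateral (Q-matching) equations of Tollefson.

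The main obstacle is the explicit combinatorial construction: locating the triangulation $\T$, the right family $F_i$, and verifying that the relevant quad coordinates are positive and that $S_{m_1,m_2}$ spans an extreme ray of the relative cone uniformly in $(m_1,m_2)$. In practice one should expect $\T$ to be small (on the order of the canonical SnapPea triangulation of $W$), and the needed surfaces $F_i$ to appear among the first few fundamental surfaces, so this is a finite check for the triangulation together with an algebraic check against $(m_1,m_2)$.

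Given such a family, the final step is to compute the boundary slope on $T_0$. Fixing a meridian and longitude of $C_0$ from the original link diagram, the algebraic intersection numbers of $\partial S_{m_1,m_2}$ with each of these are linear combinations of the quad coordinates of $S_{m_1,m_2}$ around $T_0$, hence affine linear in $(m_1,m_2)$. Write them as $p(m_1,m_2)$ and $q(m_1,m_2)$. The slope is $p/q$, and the appearance of the denominator $m_1 + m_2 - 1$ in the claimed formula reflects that the longitudinal coordinate $q$ has $m_1 + m_2 - 1$ as a factor only after cancellation. Reducing $p/q$ and comparing with $3(1+m_1) + 9 m_2 + (m_1-1)^2/(m_1+m_2-1)$ finishes the proof; this last step is a routine calculation once $\T$, the $F_i$, and the formulas for $\partial S_{m_1,m_2}$ on $T_0$ are written down.
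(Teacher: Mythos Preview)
Your approach is correct and matches the paper's proof: it applies Theorem~\ref{thm-filling} to an explicit 8-tetrahedron triangulation of $W$, finds three spun-normal surfaces $S_1, S_2, S_3$ all with the \emph{same} quad type in every tetrahedron (so $L(\T,Q)$ is exactly their span and the vertex check reduces to the injectivity of the boundary-slope map on their convex hull), and takes the combination $2(m_1-1)S_1 + m_2 S_2 + 2(m_1-1)m_2 S_3$ to hit slopes $-1/m_1,-1/m_2$ on $C_1,C_2$. The one point you did not anticipate is that the peripheral basis on $C_0$ coming from $\T$ is not the homological one for $L(m_1,m_2)$, so a correction of $4m_1 + 9m_2$ (from the linking numbers $\mathrm{lk}(C_0,C_i)$) must be added to the slope computed on $T_0$ before it matches the stated formula.
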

It is mysterious how the Jones polynomial selects one (out of the
many) boundary slopes of a knot, and it was fortunate that this slope
happens to be one of the few accessible by the special method of
Theorem~\ref{thm-filling} for the family $L(m_1,m_2)$ of 2-fusion
knots.  Indeed, we tried without success to apply our same method to
confirm the Slope Conjecture for the rest of the 2-fusion knots.  Note
also that the results of \cite{FuterKalfagianniPurcell2011} do not
imply Theorem~\ref{thm.Lm1m2} as the former only produce integer
boundary slopes.

\subsection{Technical results} 

In addition to Theorems~\ref{thm-one-cusp} and \ref{thm-filling}, we
make progress on the question of which spun-normal surfaces in an
ideal triangulation $\T$ arise from an ideal point of the deformation
variety $D(\T)$ (see Section~\ref{sec-def-var} for more on the
latter).  In particular, given an ideal triangulation $\T$ of a
manifold $M$ with one torus boundary component, the goal is to
determine all the boundary slopes that arise from ideal points of
$D(\T)$.  Of course, one can find all such detected slopes by
computing the $A$-polynomial, but this is often a very difficult
computation, involving projecting an algebraic variety
(i.e.~eliminating variables).

For a fixed surface $S$, we give a relatively easy-to-check
algebro-geometric condition (Lemma~\ref{lem-gen-char}) which is both
necessary and sufficient for $S$ to come from an ideal point.  However,
there are often only finitely many ideal points but infinitely many
spun-normal surfaces, and so Lemma~\ref{lem-gen-char} does not
completely solve this problem.  A natural hope is that the surfaces
associated to ideal points would be vertex or fundamental surfaces,
but we give a simple example in Section~\ref{sec-funny-knot} where
this is not the case.

\subsection{Outline of contents}

In Sections~\ref{sec-spun-normal} and \ref{sec-def-var} we review the
basics of spun-normal surfaces and deformation varieties.  Then in
Section~\ref{sec-ideal-gluing}, we study a class of algebraic
varieties which includes these deformation varieties.  We place
Kabaya's motivating result into that context
(Proposition~\ref{prop-Kabaya-key}) and also give a necessary and
sufficient condition for there to be an ideal point with certain data
(Lemma~\ref{lem-gen-char}).  Section~\ref{sec-proof-one-cusp} is
devoted to the proof of Theorem~\ref{thm-one-cusp}, and then
Section~\ref{sec-alt} applies this result to give non-integral
boundary slopes for alternating knots.  Likewise,
Section~\ref{sec-filling} proves Theorem~\ref{thm-filling} and
Section~\ref{sec.thmLm1m2} applies it to the Slope Conjecture for
  2-fusion knots.  Finally, Section~\ref{sec-which-come-from-ideal}
  explores the effectiveness and limitations of the methods studied
  here.

\subsection{Acknowledgments}     

The authors were partially supported by the U.S. National Science
Foundation, via grants DMS-0707136 and DMS-0805078 respectively.  We
thank Marc Culler, Tom Nevins, Fernando Rodriguez Villegas, Hal
Schenck, Saul Schleimer, Eric Sedgwick, Henry Segerman, Bernd
Sturmfels, Stephan Tillmann, and Josephine Yu for helpful
conversations, and the organizers of the Jacofest conference for their
superb hospitality.   We also thank the referee for their very detailed
and helpful comments on the original version of this paper.  

\section{Spun-normal surfaces}\label{sec-spun-normal}

In this section, we sketch Thurston's theory of spun-normal surfaces
in ideal triangulations.  We follow Tillmann's exposition \cite{Till1}
which contains all the omitted details (see also \cite{Kang, KR}).
Let $M$ be a compact oriented 3-manifold whose boundary is a nonempty
union of tori.  An ideal triangulation $\T$ of $M$ is a $\Delta$-complex
(in the language of \cite{HatcherBook}) made by identifying faces of
\3-simplices in pairs so that $\T \setminus(\mbox{vertices})$ is homeomorphic
to $\mbox{int}(M)$.  Thus $\T$ is homeomorphic to $M$ with each
component of $\partial M$ collapsed to a point.

A spun-normal surface $S$ in $\T$ is one which intersects each
tetrahedron in finitely many quads and infinitely many triangles
marching out toward each vertex (see Figure~\ref{fig-tet-with-spun}(a)).
While there are infinitely many pieces, in fact $S$ is typically the
interior of a properly embedded compact surface in $M$ whose boundary has been
``spun'' infinitely many times around each component of $\partial M$.  (The
other possibility for $S$ near a vertex is that it consists of
infinitely many disjoint boundary-parallel
tori.)
\begin{figure}
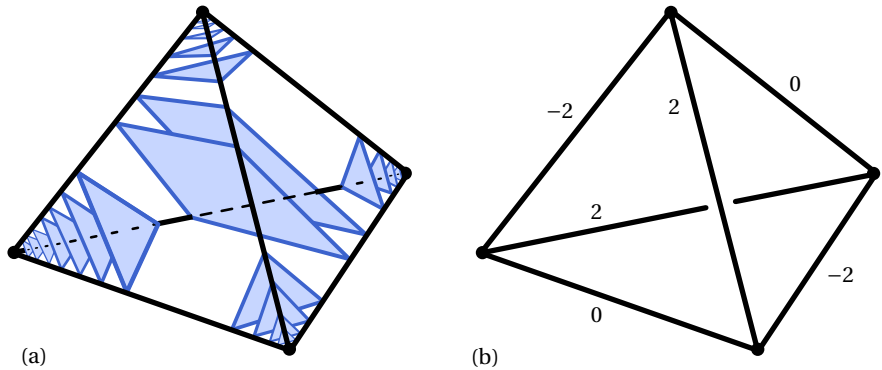

  \begin{cxyoverpic}{(329,133)}{scale=1.0}{images/spun-normal-tet}
    ,(222,49)*+!D{2}
    ,(259,94)*++!R{2}
    ,(216,87)*+!DR{-2}
    ,(306,36)*+!UL{-2}
    ,(227,21)*+!UR{0}
    ,(292,97)*+!DL{0}
    ,(10,6)*+!U{\mbox{(a)}}
    ,(180,6)*+!U{\mbox{(b)}}
 \end{cxyoverpic}
  \caption{At left is the intersection of a spun-normal surface with a
    single tetrahedron, with infinitely many triangles in each corner.  At
    right are the edge shifts of the hexagon regions as defined in
    Figure~\ref{fig-shifts}.}
  \label{fig-tet-with-spun}
\end{figure}
 \begin{figure}
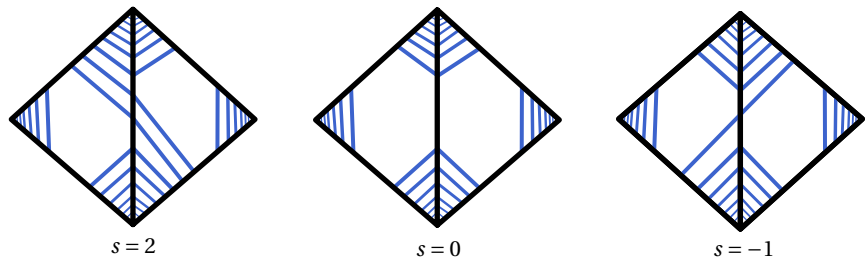

  \begin{cxyoverpic}{(323,86)}{scale=1.0}{images/shifts}
    ,(47,3)*++!U{s = 2}
    ,(162,2)*++!U{s = 0}
    ,(277,2)*++!U{s = -1}
  \end{cxyoverpic}
  \vspace{-10pt}
  \caption{The shift parameter $s$ of an edge describes the relative
    positions of the adjacent hexagons, as viewed from outside the
    simplex $\Delta$.  Here $\Delta$ is oriented by the orientation of
    $M$, and the induced orientation of $\partial\Delta$ distinguishes
    between $s > 0$ and $s < 0$.  As each picture is invariant under
    rotation by $\pi$, the shift does \emph{not} depend on an
    orientation of the edges themselves.  The convention here agrees
    with \cite{Till1}.}
  \label{fig-shifts}
\end{figure}
Notice from Figure~\ref{fig-tet-with-spun}(a) that on any face of a
tetrahedron, there is exactly one hexagon region and infinitely many
four-sided regions.  Thus to specify a spun-normal surface $S$, we
need only record the number and type of quads in each tetrahedron of
$\T$, since the need to glue hexagons to hexagons uniquely specifies
how the local pictures of $S$ must be glued together across adjoining
tetrahedra.  As there are three kinds of quads, if $\T$ has $n$
tetrahedra then $S$ is uniquely specified by a vector in $\Z_+^{3n}$
called its $Q$-coordinates.  This vector satisfies certain linear
equations which we now describe as they will explain how ideal points
of the deformation variety give rise to such surfaces.

For an edge of a tetrahedron, let $s$ be the amount the adjacent
hexagons are shifted relative to each other; the orientation
convention is given in Figure~\ref{fig-shifts}, and
Figure~\ref{fig-tet-with-spun}(b) shows the resulting shifts on all
edges of a tetrahedron.
It is not so hard to see that $v \in \Z_+^{3n}$ corresponds to a spun-normal
surface if and only if
\begin{enumerate}
  \item \label{cond-add}
    There is at most one non-zero quad weight in any given tetrahedron.

  \item \label{cond-Q} 
    As we go once around an edge, the positions of the hexagons match up.
    That is, the sum of the shifts $s$ must be $0$.
\end{enumerate}
The shifts are linear functions of the entries of $v$ (see
Figure~\ref{fig-tet-with-spun}), and so the conditions in
(\ref{cond-Q}) form a linear system of equations called the
\emph{Q-matching equations}.

As their $Q$-coordinates satisfy various linear equalities and
inequalities, spun-normal surfaces fit into the following geometric
picture.  Let $C(\T)$ be the intersection of $\R_+^{3n}$ with the
subspace of solutions to the $Q$-matching equations.  Thus $C(\T)$ is
a finite-sided convex cone.  If we impose condition (\ref{cond-add})
as well, we get a set $F(\T)$ which is a finite union of convex cones
whose integral points are precisely the $Q$-coordinates of spun-normal
surfaces.  Within each convex cone of $F(\T)$, vector addition
of $Q$-coordinates corresponds to a natural geometric sum operation on
the associated spun-normal surfaces.

It is natural to projectivize $F(\T)$ by intersecting it with the
affine subspace where the coordinates sum to 1.  The resulting set
$\mathit{PF}(\T)$ is a finite union of compact polytopes.  Since all the
defining equations had integral coefficients, the vertices of these
polytopes lie in $\Q^{3n}_+$.  For such a vertex $v$, consider the
smallest rational multiple of $v$ which lies in $\Z^{3n}_+$; that vector
gives a spun-normal surface, called a \emph{vertex surface}.  Vertex
surfaces play a key role in normal surface theory generally and here
in particular.

One major difference between spun-normal surface theory and the
ordinary kind for non-ideal triangulations is that normalizing a given
surface is much more subtle.  This is because of the infinitely many
intersections of a spun surface with the 1-skeleton of $\T$.  However,
building on ideas of Thurston, Walsh has shown that essential surfaces
which are not fibers or semi-fibers can be spun-normalized, using
characteristic submanifold theory \cite{Walsh}.  Despite this, some
key algorithmic questions remain unanswered for spun-normal surfaces.
For instance, when $M$ has one boundary component, do all the strict
boundary slopes arise from \emph{vertex} spun-normal surfaces which
are also essential?  For an ordinary triangulation of $M$ (which will
typically have more tetrahedra than an ideal one), the answer is yes
\cite[Theorem~5.3]{JS}.

\subsection{Ends of spun-normal surfaces} \label{sec-ends-spun}

We now describe how a spun-normal surface gives rise to a properly
embedded surface in $M$, closely following Sections 1.9-1.12 of
\cite{Till1}.  For notational simplicity, we assume that $M$ has a
single boundary component.  Let $\v$ be the vertex of $\T$, and
consider a small neighborhood $\Nv$ of $\v$ bounded by a normal torus
$\Bv$ consisting of one normal triangle in each corner of every
tetrahedron in $\T$.  We can assume that $\Bv$ and $S$ are in general
position and that $\Nv$ meets only normal triangles of $S$.
 
We put a canonical orientation on the curves of $S \cap \Bv$ as
follows.  First, triangulate $\Nv$ by taking the cone to $\v$ of the
triangulation of $\Bv$.  If $\n$ is a normal triangle of $S$ meeting
$\Nv$, its interior meets exactly one tetrahedron $\Deltav$ in $\Nv$.
\begin{figure}[tb]
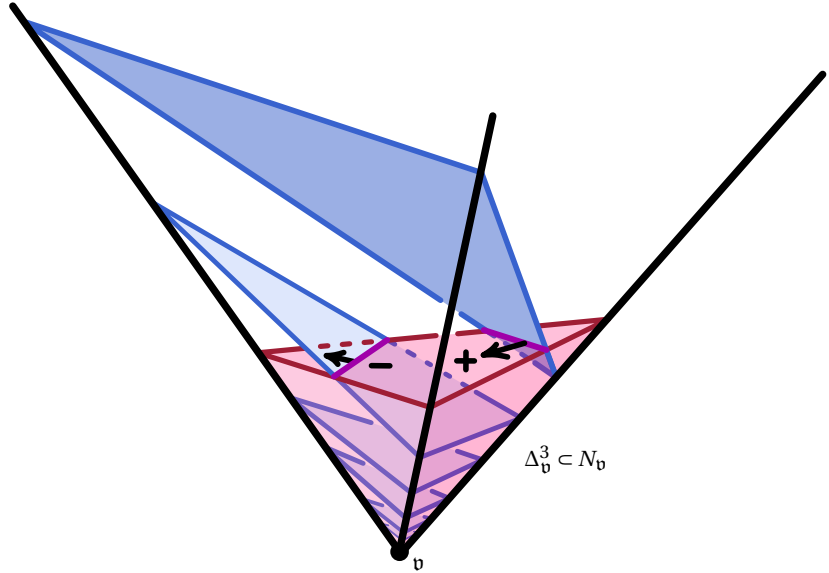

  \begin{cxyoverpic}{(233,154)}{scale=1.4}{images/spun-ends}
    ,(106,2)*++!L{\v}
   ,(138,30)*+!L{\Deltav \subset \Nv}
  \end{cxyoverpic}
 \caption{Orienting $S \cap \Bv$.  Notice that $S$ meets the triangle
    of $\Bv$ in at most two of the three possible types of normal
    arcs. }\label{fig-bdry-orient}
\end{figure}  
We orient $\n$ by assigning $+1$ to the component of $\Deltav
\setminus \n$ which contains $\v$.  This induces a consistent
transverse orientation for each component of $ S \cap \Bv$ as shown in 
Figure~\ref{fig-bdry-orient}.

By Lemma~1.31 of \cite{Till1}, we can also do a normal isotopy of $S$
so that all the components of $S \cap \Bv$ are nonseparating in the
torus $\Bv$.  If the components of $S \cap \Bv$ don't all have the
same orientation, apply the proof of Lemma~1.31 of \cite{Till1} to an
annulus between two adjacent components with opposite orientations to
reduce the size of $S \cap \Bv$.  Thus we can assume that all
components of $S \cap \Bv$ have the same orientation.  It then follows
from Lemma~1.35 of \cite{Till1} that $S \cap \Nv$ consists of parallel
half-open annuli spiraling out toward $\v$.

We now identify $\T \setminus \mathrm{int}({\Nv})$ with $M$.  Then $S'
= S \cap M$ is a properly embedded surface in $M$.  Since we
understand $S \cap \Nv$, it's easy to see that the isotopy type of
$S'$ is independent of the choice of such $\Nv$.  (Here isotopies of $S'$
are allowed to move $\partial S'$ within $\partial M$; the isotopy
class of $S'$ with $\partial S'$ fixed typically does depend on the
choice of $\Nv$.)  Thus, it makes sense to talk about the number of
boundary components of $S$ and their slope.

\section{Deformation varieties} \label{sec-def-var}

As in Section~\ref{sec-spun-normal}, let $\T$ be an ideal
triangulation of a compact oriented 3-manifold $M$ with boundary a
union of tori.  Thurston \cite{Th} introduced the deformation variety
$D(\T)$ parameterizing (incomplete) hyperbolic structures on
$\mathrm{int}(M)$ where each tetrahedron in $\T$ has the shape of some
honest ideal tetrahedron in $\H^3$.  The deformation variety plays a
key role in understanding hyperbolic Dehn filling \cite{Th, NZ}, and
is closely related to the $\PSL{2}{\C}$-character variety of $\pi_1(M)$.
Via the latter picture, ideal points of $D(\T)$ often give rise to
essential surfaces in $M$, and spun-normal surfaces are the natural
way to understand this process.  In this section, we sketch the needed
properties of $D(\T)$ from the point of view of \cite{Dapp,Till2}
which contain the omitted details.

\begin{figure}
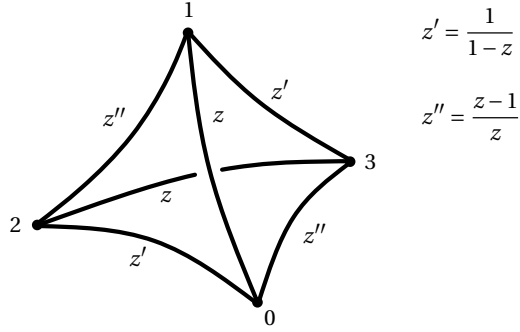

  \begin{cxyoverpic}{(153,133)}{scale=0.8}{images/ideal}
    ,(106,2)*+!UL{0}
    ,(150,69)*++!L{3}
    ,(2,39)*++!R{2}
    ,(74,130)*++!D{1}
    ,(124,43)*+!UL{z''}
    ,(48,82)*+!DR{z''}
    ,(58,32)*+!UR{z'}
    ,(109,95)*+!DL{z'}
    ,(78,90)*++!L{z}
    ,(50,65)*+++!UL{z}
    ,(180,130)*+!L{\displaystyle z' = \frac{1}{1 - z}}
    ,(180,90)*+!L{\displaystyle z'' = \frac{z -1}{z}}
  \end{cxyoverpic}
  \caption{The relationship between the shape parameters of the edges
    of an oriented tetrahedron in $\H^3$.  Our convention here agrees
    with \cite{SnapPy} and \cite[Section 2.2]{Till1}.}  \label{fig-edge-param}
\end{figure}

Suppose $\Delta$ is a non-degenerate ideal tetrahedron in $\H^3$, which
has an intrinsic orientation (i.e.~an ordering of its vertices).  Each
edge of $\Delta$ has a \emph{shape parameter}, defined as follows.  We
apply an orientation preserving isometry of $\H^3$ so that the
vertices of $\Delta$ are $(0, 1, \infty, z)$ and so this ordering induces the
orientation of $\Delta$.  The shape parameter of the edge $(0, \infty)$ is
then $z$, which lies in $\C \setminus \{0, 1\}$.  Opposite edges have the
same parameter, and any parameter determines all the others, as described in
Figure~\ref{fig-edge-param}, or as encoded in
\begin{equation}\label{eq-tet-shape}
 z'(1-z) = 1 \quad \mbox{and} \quad  z \, z' z'' = -1.
\end{equation}
Returning to our ideal triangulation $\T$, suppose it has $n$
tetrahedra. An assignment of hyperbolic shapes to all the tetrahedra
is given by a point in $(\C^3)^n$ which satisfies $n$ copies of the
equations (\ref{eq-tet-shape}).  The \emph{deformation variety}
$D(\T)$, also called the \emph{gluing equation variety}, is the
subvariety of possible shapes where we require in addition that the
\emph{edge equations} are satisfied: for each edge the product of the
shape parameters of the tetrahedra around it is $1$.  This requirement
says that the hyperbolic structures on the individual tetrahedra glue
up along the edge.

Because $D(\T) \subset \C^{3n}$ satisfies the conditions coming from
(\ref{eq-tet-shape}), at a point of $D(\T)$ no shape parameter takes
on a degenerate value of $\{0,1,\infty\}$.  Consequently, a point of
$D(\T)$ gives rise to a developing map from the universal cover
$\Mtil$ to $\H^3$ which takes each tetrahedron of $\widetilde \T$ to
one of the appropriate shape (see Lemma~\ref{lem-dev-exists} below).
This developing map is equivariant with respect to a corresponding
holonomy representation $\rho \maps \pi_1(M) \to \PSL{2}{\C}$.  In fact,
there is a regular map
\begin{equation}\label{eqn-map-to-char}
  D(\T) \to \Xbar(M) \quad \mbox{where $\Xbar(M)$ is the $\PSL{2}{\C}$-character variety of $\pi_1(M)$.}
\end{equation}
This map need not be onto, see e.g.~the last part of Section 10 of
\cite{Dapp}.  However, if $M$ is hyperbolic and no edge of $\T$ is
homotopically peripheral, then the image is nonempty.  In particular,
it contains a 1-dimensional irreducible component containing the
discrete faithful representation $\pi_1(M) \to \PSL{2}{\C}$ coming from
the unique oriented complete hyperbolic structure on $M$.

\begin{remark}\label{rem-edge-essen}
  In Lemma 2.2 of \cite{Till2}, the existence of
  (\ref{eqn-map-to-char}) is predicated on the edges of $\T$ being
  homotopically non-peripheral, whereas this condition is not
  mentioned in \cite{Dapp}.  Indeed it is not necessary to restrict
  $\T$, but as \cite{Dapp} is terse on this point, we give a proof here
  of:
  \begin{lemma}\label{lem-dev-exists}
    For any triangulation $\T$, a point in $D(\T)$ gives rise to a
    developing map $\Mtil \to \H^3$, and hence a holonomy
    representation $\pi_1(M) \to \PSL{2}{\C}$.   
  \end{lemma}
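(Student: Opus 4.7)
The plan is to construct the developing map one tetrahedron at a time in the universal cover $\Mtil$, use the edge equations to verify consistency, and then read off the holonomy representation from the equivariance of the construction.

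First, fix a lift $\widetilde{\Delta}_0 \subset \Mtil$ of one tetrahedron of $\T$, together with an orientation-preserving embedding $\widetilde{\Delta}_0 \hookrightarrow \H^3$ realizing $\widetilde{\Delta}_0$ as an ideal hyperbolic tetrahedron with the shape parameters prescribed by the given point of $D(\T)$. Each face $F$ of $\widetilde{\Delta}_0$ is shared with exactly one adjacent tetrahedron $\widetilde{\Delta}_1 \subset \Mtil$, and there is a unique ideal hyperbolic tetrahedron in $\H^3$ of the prescribed shape that shares the developed image of $F$ and lies on the opposite side from the image of $\widetilde{\Delta}_0$. Declare this tetrahedron to be the developed image of $\widetilde{\Delta}_1$, and iterate along any chain of face-adjacent tetrahedra to obtain a candidate map $\mathrm{dev} \maps \Mtil \to \H^3$.

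For path-independence, consider the dual 2-complex $K$ of $\T$: its vertices are the tetrahedra of $\T$, its edges are face pairings, and its 2-cells correspond bijectively to the edges of $\T$, each attached along the cyclic sequence of tetrahedra incident to its edge. For any ideal triangulation of $M$, $K$ is a spine of $\mathrm{int}(M)$, and so the lift $\widetilde{K} \subset \Mtil$ is simply connected. Any two edge-paths in $\widetilde{K}$ between a common pair of vertices therefore cobound a 2-chain, which is a sum of boundaries of 2-cells of $\widetilde{K}$. The holonomy accumulated by developing around the 2-cell associated to an edge $e$ is the product of the dihedral shape parameters of the tetrahedra meeting $e$, which equals $1$ by the edge equation at $e$. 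Hence the two paths yield the same developed image, and $\mathrm{dev}$ is well-defined.

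The holonomy representation arises formally from the construction: each deck transformation $g \in \pi_1(M)$ permutes the lifts in $\widetilde{\T}$, and by the uniqueness of the extension there is a unique $\rho(g) \in \PSL{2}{\C}$ satisfying $\mathrm{dev} \circ g = \rho(g) \circ \mathrm{dev}$; composing developments then forces $\rho$ to be a homomorphism. The one delicate point—and the reason Tillmann imposed non-peripherality in \cite{Till2}—is the assertion that $\widetilde{K}$ is simply connected and that no auxiliary conditions at the ideal vertices are needed. This is, however, a purely combinatorial fact about the dual spine of any ideal triangulation, independent of whether any edge of $\T$ is homotopically peripheral, so the argument goes through in complete generality.
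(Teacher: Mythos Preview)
Your proof is correct and follows essentially the same approach as the paper: both arguments build the developing map by propagating tetrahedron shapes across faces and use the edge equations to verify consistency around each edge of $\T$. The only organizational difference is that the paper first removes the $1$-skeleton and passes to the universal cover of $\Ndot$ (where the dual graph is literally a tree) before quotienting by the relations from the dual $2$-cells, whereas you work directly in $\Mtil$ and invoke the simply-connectedness of the lifted dual $2$-complex $\widetilde{K}$; these are equivalent packagings of the same idea.
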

  In fact, the proof will show that if $D(\T)$ is nonempty, then a
  posteriori every edge in $\T$ is homotopically non-peripheral,
  meshing with Lemma 2.2 of \cite{Till2}.  A more detailed proof of a
  generalization of Lemma~\ref{lem-dev-exists} is given in
  \cite{TillmannSegerman}.
  \begin{proof}
    Let $N = M \setminus \partial M$ which we identify with the underlying space of
    $\T$ minus the vertices.  Looking that the universal cover of $N$,
    we seek a map
    \[
    d \maps \Ntil \to \H^3
    \] 
    which takes each ideal simplex in $N$ to an ideal simplex of $\H^3$
    with the assigned shape (in particular, we are not yet trying to
    define the map at infinity).  Let $\Ndot$ be $N$ minus the
    1-skeleton of $\T$, which deformation retracts to the dual
    1-skeleton of $\T$.  In particular, $\pi_1(\Ndot)$ is free, and
    universal cover $U$ of $\Ndot$ consists of tetrahedra with their
    1-skeletons deleted,  arranged so the dual 1-skeleton is an infinite
    tree.  Thus it is trivial to inductively define a map
    \[
    \dtil \maps U \to \H^3
    \]
    which takes what's left of each tetrahedron in $U$ to a correctly
    shaped ideal tetrahedron in $\H^3$ with its edges deleted.  Let
    $\Ntildot$ be $\Ntil$ minus the lifted 1-skeleton of $\T$.  Then we
    have covers $U \to \Ntildot \to \Ndot$.  The cover $\Ntildot \to \Ndot$
    corresponds to the normal subgroup $\Gamma$ of $\pi_1(\Ndot)$ generated
    by the boundaries of the dual 2-cells of $\T$, one corresponding to
    each edge.  The condition that the shape parameters have product 1
    for each edge mean that $\dtil$ is invariant under the deck
    transformation corresponding to the boundary of a dual 2-cell; hence
    $\dtil$ descends to a map $d$ of $\Ntildot = U/\Gamma$ to $\H^3$.  The
    same edge condition also means that $d$ extends over the deleted
    1-skeleton to the desired map $d \maps \Ntil \to \H^3$.  (This is
    perhaps easier to understand if one only wants the corresponding
    representation: the holonomy representation $\pi_1(\Ndot) \to
    \PSL{2}{\C}$ for $\dtil$ clearly has the boundary of each dual
    2-cell in its kernel, and thus factors through to a representation
    of $\pi_1(N)$.)
    
    Now that we have $d \maps \Ntil \to \H^3$ in hand, it is not hard
    to extend it to a continuous map from the end-compactification
    $\Nbar$ of $\Ntil$ to $\Hbar = \H^3 \cup S^2_\infty$.  This gives a
    pseudo-developing map in the sense of \cite[Section
    2.5]{Dunfield3}, and a posteriori certifies that the edges of $\T$
    are homotopically non-peripheral, since they go to infinite
    geodesics under $d$ which have two distinct limit points in
    $S^2_\infty$.
    \end{proof}
\end{remark}

\subsection{Ideal points and spun-normal surfaces}
\label{sub-ideal-spun}

We now describe the connection between $D(\T)$ and essential surfaces
in $M$, which has its genesis in the work of Culler and Shalen on the
character variety \cite{CS, CGLS}.  When $M$ has one boundary
component, a geometric component of $D(\T)$ has complex dimension one,
and it is common that all irreducible components of $D(\T)$ are also
curves.  Thus for simplicity we focus on an irreducible curve $D \subset
D(\T)$; for the full story of ideal points as points in Bergman's
logarithmic limit set, see \cite{Till2}.

As $D$ is an affine algebraic variety, it is not compact. Let $\Dtil$
be a smooth projective model for $D$, which in particular is a compact
Riemann surface together with a rational map $f \maps \Dtil \to D$
which is generically 1-1.  An \emph{ideal point} of $D$ is a point of
$\Dtil$ where $f$ is not defined.  For each edge of a tetrahedron in
$\T$, the corresponding shape parameter $z$ gives an
everywhere-defined regular function $z \maps \Dtil \to \BP^1(\C)$.
From (\ref{eq-tet-shape}), it is easy to see that at an ideal point,
the three shape parameters of a given tetrahedron are either $(z, z',
z'') = (0, 1, \infty)$ (or some cyclic permutation thereof), or all
take on values in $\C \setminus \{0,1\}$.

We next describe how to define from an ideal point $\xi$ of $D$ a
spun-normal surface $S(\xi)$.  For each tetrahedron $\Delta$ of $\T$,
we label each edge by the order of zero of the corresponding shape
parameter at $\xi$ (poles count as negative order zeros).  For
instance, in Figure~\ref{fig-edge-param}, if $z$ has a zero of order 2
at $\xi$, then the formulae for $z'$ and $z''$ mean that the edges of
$\Delta$ are labeled as shown in Figure~\ref{fig-tet-with-spun}(b).
In general, the labeling associated to $\xi$ similarly arises as the
edge shifts of a unique spun-normal picture in $\Delta$.  In the
case just mentioned, this is shown in
Figure~\ref{fig-tet-with-spun}(a); in general, if $n$ is the largest
order of zero of the shape parameters, then $S(\xi) \cap \Delta$ has
$n$ quads which are disjoint from the edges whose shape parameters are $1$ at
$\xi$.  That these local descriptions of $S(\xi)$ actually give a
spun-normal surface can be seen as follows.  Focus on an edge of $\T$,
and let $z_1, \ldots, z_k$ be the shape parameters of the tetrahedra
around it. Now on $D(\T)$ and hence on $\Dtil$ we have $\prod z_i =
1$, and taking orders of zeros turns this into the $Q$-matching
equation for that edge, namely that the sum of the shifts is 0.

Before addressing the question of when $S(\xi)$ is essential, we
mention that there is a closely related construction of Yoshida
\cite{Y} which also associates a surface to an ideal point of $D$; see
Segerman \cite{Seg} for the exact relationship between these two
surfaces.

\subsection{Ideal points and essential surfaces}
\label{sub-ideal-ess}

Culler and Shalen showed how to associate to an ideal point of the
character variety $\Xbar(M)$ an essential surface via a non-trivial
action on a tree \cite{CS}.  However, not every ideal point $\xi$ of
$D(\T)$ gives rise to an essential surface, as sometimes ideal points
of $D(\T)$ map to ordinary points of $\Xbar(M)$.  We now describe how
when $S(\xi)$ has non-empty boundary (in the sense of
Section~\ref{sec-ends-spun}) then it does come from an ideal point of
$\Xbar(M)$.

As this is the key condition, we first sketch how to determine
whether the surface $S(\xi)$ has nonempty boundary along a component $T$ of
$\partial M$ or instead consists of infinitely many boundary-parallel
tori; for details see \cite[Section 4]{Till2} and \cite[Sections 1 and
3]{Till1}.  For an element $\gamma \in \pi_1(\partial T)$, here is how
to calculate the intersection number between $\gamma$ and $\partial
S(\xi)$.  For a point $D(\T)$, the holonomy in the sense of
\cite{T} and \cite{NZ} is given by
\[
 h(\gamma) = z_1 z_2 \cdots z_k  \quad \mbox{for certain shape
   parameters $z_i$.}
\]
View the components of $\partial S(\xi)$ on $T$ as all oriented in the
same direction, which direction being determined by how $S(\xi)$ is
spinning out toward the boundary (see \cite[Section 3.1]{Till1}).
Then the algebraic intersection number of $\gamma$ and $\partial
S(\xi)$ is the order of zero of $h(\gamma)$ at $\xi$.  In particular,
by taking a basis for $\pi_1(T)$, it is easy to check whether $S(\xi)$
has boundary and, if so, what the slope is.

We now turn to the question of when $S(\xi)$ can be reduced to an
essential surface, in the following sense: a surface $S$ is said to
\emph{reduce} to $S'$ if there is a sequence of compressions, boundary
compressions, elimination of trivial 2-spheres, and elimination of
boundary-parallel components which turns $S$ into $S'$.  We then say
that $S'$ is a \emph{reduction} of $S$.  It will be convenient later
to consider more broadly spun-normal surfaces $S$ whose
$Q$-coordinates are a rational multiple of those of $S(\xi)$; we call
such $S$ \emph{associated} to $\xi$.

\begin{theorem}\label{thm-ideal-pt-to-surface}
  Let $\xi$ be an ideal point of a curve $D \subset D(\T)$.  Suppose a
  two-sided spun-normal surface $S$ associated to $\xi$ has non-empty
  boundary with slope $\alpha$ on a component $T$ of $\partial M$.
  Then any reduction of $S$ has nonempty boundary along $T$ with slope
  $\alpha$. In particular, $S$ can be reduced to a non-empty essential
  surface in $M$ which also has boundary slope $\alpha$.
\end{theorem}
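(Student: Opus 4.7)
The plan is to track the homology class of $\partial S \cap T$ through the reduction process. The two key observations are: (i) this class is non-zero, because $S$ comes with the structure of a spun-normal surface; and (ii) the class is preserved under every reduction operation.

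For (i), I would apply Section~\ref{sec-ends-spun} (specifically Lemma~1.35 of \cite{Till1}) to isotope $S$ so that $\partial S \cap T$ consists of $n \ge 1$ coherently oriented parallel essential simple closed curves of slope $\alpha$. With the orientation convention coming from Figure~\ref{fig-bdry-orient}, this forces $[\partial S \cap T] \in H_1(T; \Z)$ to equal $n\,[\alpha]$, a non-zero integer multiple of the primitive class determined by the slope $\alpha$.

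For (ii), each reduction operation preserves the relative class $[S] \in H_2(M, \partial M; \Z)$—a standard fact from 3\hyp manifold topology. For a compression, $S$ and $S'$ cobound a 3-ball. For a boundary compression along a disk $D$ with $\partial D = \alpha_0 \cup \beta_0$, $\alpha_0 \subset S$, $\beta_0 \subset \partial M$, the surfaces $S$ and $S'$ together with a rectangle in $\partial M$ between the pushed-off copies of $\beta_0$ cobound a 3-ball; the rectangle contributes only to the $\partial M$ part of the chain. A trivial 2-sphere bounds a ball. A boundary-parallel component $C$ with product region $C \times I$ has $\partial C$ bounding the subsurface of $\partial M$ cut off by $C \times \{0\}$, hence null-homologous in $H_1(\partial M; \Z)$. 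By naturality of the connecting homomorphism $\partial \colon H_2(M, \partial M) \to H_1(\partial M)$, preservation of $[S]$ implies preservation of $[\partial S]$, and in particular of $[\partial S \cap T]$.

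Combining (i) and (ii), any reduction $S'$ of $S$ satisfies $[\partial S' \cap T] = n[\alpha] \ne 0$, which forces $S' \cap T$ to consist of a nonempty union of curves of slope $\alpha$. This gives the first claim. For the ``in particular'' conclusion, classical Haken\hyp style finiteness arguments guarantee that iterating the four reduction operations terminates in finitely many steps at an essential (possibly empty) surface; by the first claim, the result is nonempty with boundary slope $\alpha$ on $T$. The main obstacle is careful bookkeeping in step (ii) when verifying homology preservation under boundary compression and elimination of boundary-parallel components, where orientations of arcs and annuli on $\partial M$ must be tracked, but this is routine 3\hyp manifold topology.
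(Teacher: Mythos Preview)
Your approach via relative homology is genuinely different from the paper's, which goes through the Culler--Shalen/Bass--Serre machinery: the paper shows that a peripheral curve $\beta$ dual to $\alpha$ acts loxodromically on the tree $\TN$ dual to $S(\xi)$ (because $h(\beta)$ has a pole at $\xi$, hence $\beta$ is loxodromic on the character-variety tree $T_{\xi'}$, and there is an equivariant map $\TN\to T_{\xi'}$), and then observes that if a reduction $S'$ had $S'\cap T=\emptyset$ the subgroup $\pi_1(T)$ would fix a vertex of $\TN$, contradicting this.

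Your argument has a real gap: steps (i) and (ii) use \emph{different} orientations on $\partial S$.  The ``canonical orientation'' of Section~\ref{sec-ends-spun} and Figure~\ref{fig-bdry-orient} that makes all components of $\partial S\cap T$ coherently oriented---so that they sum to $n[\alpha]$---is determined by the \emph{spinning direction}, not by a global orientation of the surface.  But to speak of $[S]\in H_2(M,\partial M;\Z)$ in step (ii) you must orient $S$, and then $\partial[S]$ equals $[\partial S]$ with the \emph{induced} boundary orientation.  These need not agree: for example, if $S$ is an annulus with both boundary circles of slope $\alpha$ on $T$ (as happens for vertical annuli in Seifert fibered pieces, which are detected by ideal points), the induced orientation gives $[\partial S]=0$ even though both ends spin the same way, and your argument then yields only $[\partial S']=0$, which does not prevent $\partial S'\cap T=\emptyset$.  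The tell is that your argument never uses the ideal point $\xi$ at all; the paper invokes $\xi$ exactly to obtain the loxodromic action of $\beta$, which is strictly stronger than---and not a consequence of---the homological non-vanishing you need.
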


\begin{proof}
  This will follow easily from \cite[Section 6]{Till2}, but to this
  end we note that we have defined ``spun-normal'' slightly
  differently than \cite{Till2}.  In particular, what we call
  spun-normal with non-empty boundary he calls simply spun-normal.
  Moreover, in \cite{Till2} the surface $S(\xi)$ is made two-sided
  simply by doubling its $Q$-coordinates if it's not; we adopt this
  convention for this proof.

  First, we reduce from an arbitrary $S$ associated to $\xi$ to
  $S(\xi)$ itself.  Let $S_0$ be the spun-normal surface corresponding
  to the primitive lattice point on the ray $\R_+ \cdot S$, i.e.~$S_0
  = (1/g) S$ where $g$ is the $\gcd$ of the coordinates of $S$.  If
  $S_0$ is two-sided, then both $S$ and $S(\xi)$ are simply a disjoint
  union of parallel copies of $S_0$, and thus we can focus on $S(\xi)$
  instead.  Should $S_0$ have a one-sided component, then as $S$ and
  $S(\xi)$ are two-sided, they are both integer multiples of $2 \cdot S_0$,
  and again we can focus on $S(\xi)$.

  We now relate $S(\xi)$ to the Bass-Serre tree associated to an ideal
  point of the $\PSL{2}{\C}$-character variety $\Xbar(M)$.  Following
  Section 5.3 of \cite{Till2}, we use $\TN$ to denote the simplicial
  tree dual to the spun-normal surface $S(\xi)$. (Unlike \cite{Till2},
  we require $S(\xi)$ to have infinitely many triangles in every
  corner of every tetrahedron; hence the dual tree to $S(\xi)$ is
  $\TN$ rather than the $\TS$ of Section 5.2 of \cite{Till2}.)  Let $N
  = \T \setminus \T^0 \cong M \setminus \partial M$, and let $p \maps
  \Ntil \to N$ be the universal covering map. (Note: our $N$ is
  called $M$ in \cite{Till2}.) There is an equivariant map $f \maps
  \Ntil \to \TN$ where the preimage of the midpoints of the edges in
  $\TN$ is precisely $p^{-1}(S(\xi))$.

  Now fix a simple closed curve $\beta \in \pi_1(T)$ which intersects
  $\alpha$ exactly once.  As discussed above, we can orient
  $\beta$ so that the holonomy $h(\beta)$ has a pole at $\xi$.  By
  Proposition 6.10 of \cite{Till2}, there is an associated ideal point
  $\xi'$ of a curve in $\Xbar(M)$ so that there is a
  $\pi_1(M)$-equivariant map from $\TN$ to the simplicial tree
  $T_{\xi'}$ associated to $\xi'$.  In particular, since $h(\beta)$
  has a pole, the action of $\beta$ on $T_{\xi'}$ is by a
  fixed-point free loxodromic transformation.  Because of the map $\TN
  \to T_{\xi'}$, it follows that $\beta$ also acts on $\TN$ by a
  loxodromic.

  As in Section~\ref{sec-ends-spun}, we identify $M$ with a suitable
  subset of $N$, and henceforth abuse notation by denoting $S \cap M$
  by $S$.  By restricting the domain, we get that $S$ is dual to the
  equivariant map $f \maps \Mtil \to \TN$.  Now if $S'$ is a reduction
  of $S$, we can modify $f$ so that that $S'$ is still dual to $\TN$.
  If $T \cap S'$ were empty, it follows that $\pi_1(T)$ acts on $\TN$
  with a global fixed point. Thus since $\beta$ acts on $\TN$ as a
  loxodromic, we have that $S'$ has nonempty boundary along $T$, as
  claimed.
\end{proof}

\section{Ideal points of varieties of gluing equation type }
\label{sec-ideal-gluing}

In this section, we consider a class of complex algebraic varieties
that arise from the deformation varieties of the last section by
focusing on a single shape  parameter for each tetrahedron.  Such
varieties were first considered by Thurston \cite{Th} and
Neumann-Zagier \cite{NZ}.

We start with a subgroup $\Lambda \subset \Z^{2n+1}$, which we call a
\emph{lattice} even when its rank is not maximal.  Let $\Coo = \C \setminus
\{0,1 \}$, and consider the variety $V(\Lambda) \subset \left(\Coo\right)^n$ of
points satisfying
\begin{equation}\label{eq-def-var}
  z_1^{a_1}z_2^{a_2} \cdots z_n^{a_n} (1 - z_1)^{b_1}(1-z_2)^{b_2} \cdots (1 - z_n)^{b_n}  = (-1)^c 
\end{equation}
for all $(a_1,\ldots,a_n,b_1,\ldots,b_n, c) \in \Lambda$.  Since $z_i$ and $(1-z_i)$
are never $0$ for $z_i \in \Coo$, these equations always make sense
even when some $a_i$ or $b_i$ is negative.  Henceforth, we assume that
$\rank(\Lambda) \leq n-1$, and call such a $V(\Lambda)$ a \emph{variety of gluing
  equation type}.  In the final application, the variety $V(\Lambda)$ will be a
complex curve and hence $\rank(\Lambda) = n - 1$. 

\begin{remark}
  Replacing the lattice $\Lambda$ with an arbitrary subset $\Omega$ of
  $\Z^{2n+1}$ doesn't broaden this class of examples, since $V(\Omega) =
  V\big(\mathrm{span}\pair\Omega\big)$.  Conversely, when testing whether
  a point is in $V(\Lambda)$, it suffices to consider only the finitely
  many equations coming from a given $\Z$-basis for $\Lambda$.  More
  precisely, let $M(\Lambda)$ be a matrix whose $r$ rows are a basis for $\Lambda$,
  and write it as
  \begin{equation}
    \label{eq-MLambda}
    M(\Lambda) = \left(\begin{array}{ccc|ccc|c}
        & & & & & & \\
        &A & & & B & &c \\
        & & & & & & \end{array}\right) 
  \end{equation}
  where $A$ and $B$ are $r \times n$ matrices, and $c$ is an $r \times 1$
  column vector. Then $V(\Lambda)$ can be described by \eqref{eq-def-var}
  for all rows $(a_1,\dots,a_n,b_1,\dots,b_n,c)$ of the matrix
  $M(\Lambda)$.
\end{remark}

\begin{example}\label{ex-DT}
  As in Section~\ref{sec-def-var}, suppose $\T$ is an ideal
  triangulation of a manifold $M$, and consider its deformation
  variety $D(\T) \subset \C^{3n}$, where $n$ is the number of tetrahedra in
  $\T$.  If we fix a preferred edge in each tetrahedron, then its shape
  parameter $z_i$ determines $z'_i$ and $z''_i$ as noted in
  Figure~\ref{fig-edge-param}; using these expressions for $z'_i$ and
  $z_i''$ turns each edge equation into one of the form
  (\ref{eq-def-var}).  Thus projecting away the other coordinates
  gives an injection $D(\T) \hookrightarrow (\Coo)^n$, and the image variety 
  $V$ is given by 
  $V(\Lambda)$ for some $\Lambda$.  The number of edges of $\T$ is equal to $n$,
  but if $D(\T)$ is non-empty then we argue that the rank of $\Lambda$ is
  $n - k$, where $k$ is the number of components of $\partial M$.

  First, the matrix $M(\Lambda)$ minus its last column has rank $r = n
  - k$; this is Proposition 2.3 of \cite{NZ} when $M$ is hyperbolic,
  and Theorem 4.1 and remark following it in \cite{Neumann1992} for the
  general case.  Thus $\Lambda$ has a basis where $r$ of the vectors have
  nonzero $a$ or $b$ components, and the rest have only the $c$
  component being nonzero.  Since $D(\T)$ is assumed nonempty, all of
  the latter must correspond to the equation $1 = 1$ rather than
  $1 = -1$ and hence may be omitted.  Thus $V$ is defined by a lattice
  $\Lambda$ of rank $r$.  As $r \leq n - 1$, the projection $V$ of $D(\T)$ is
  indeed a variety of gluing equation type.
\end{example}

\begin{remark}  
  F. Rodriguez Villegas pointed out to us that $V(\Lambda)$ is the
  intersection of a toric variety with an affine subspace.  Precisely,
  if $\C^* = \C \setminus \{0\}$ then it is isomorphic to the subspace of
  $(\C^*)^{2n+1}$ cut out by
  \begin{equation}
    \label{eq-def-toric}
    z_1^{a_1}z_2^{a_2} \cdots z_n^{a_n} w_1^{b_1}w_2^{b_2} \cdots w_n^{b_n} u^c = 1 \quad \mbox{for all $(a_1,\ldots,a_n,b_1,\ldots,b_n, c) \in \Lambda$.}
  \end{equation}
  together with $u = -1$ and $z_i + w_i = 1$ for $1 \leq i \leq n$.
  This seems potentially very useful, though we do not exploit it
  here.
\end{remark}

\subsection{Ideal points}

Let $\C^* = \C \setminus \{0\}$ and $\Co = \C \setminus \{ 1\}$.  Now in $(\Co)^n$,
consider the closure $\Vbar(\Lambda)$ of $V(\Lambda)$ in (equivalently) either
the Zariski or the analytic (naive) topology.  Points of $\Vbar \setminus V$
will be called \emph{ideal points}.  In the context of
Example~\ref{ex-DT} and Section~\ref{sub-ideal-spun}, these are images
of ideal points $\xi$ of $D(\T)$ where the preferred shape parameters
are either 0 or nondegenerate at $\xi$.  By choosing the shape
parameters appropriately, any ideal point of $D(\T)$ gives an ideal
point of the corresponding $V(\Lambda)$.  The individual ideal points of
$D(\T)$ can be found by analyzing the local structure, typically
highly singular, of the ideal points of the $V(\Lambda)$.

So returning to the context of a general $V = V(\Lambda)$, we seek to
understand the local structure of $\Vbar$ near an ideal point $p$.  In
particular, we need to find a holomorphic map from the open unit disc
$D \subset \C$ of the form
\begin{equation}\label{eq-hol-param}
  f \maps (D, 0) \to (\Vbar, p ) \quad \mbox{where $f(D \setminus \{0\}) \subset V$.}
\end{equation}
Taking $t$ as the parameter on $D$, we have 
\begin{equation}\label{eq-param-details}
  z_i = t^{d_i} u_i (t)  
\end{equation}
where $d_i \geq 0$ and $u_i$ are holomorphic functions on $D$ with
$u_i(0) \neq 0$ for all $i$, and $u_i(0) \neq 1$ when $d_i = 0$.  As
always, each $u_i$ can be represented by a convergent power series in
$\C [[t]]$.

The lattice $\Lambda$ constrains the possibilities for $\dd =
(d_1,d_2,\ldots,d_n)$ as follows.  Consider the equations coming from a
matrix $M(\Lambda)$ as in \eqref{eq-MLambda}, and substitute
(\ref{eq-param-details}) into (\ref{eq-def-var}).  If we send $t \to
0$, it follows that $\dd$ is in $\ker(A)$.  This motivates:
\begin{definition}
  A \emph{degeneration vector} is a nonzero element $\dd \in \ker(A) \cap
  (\Z_{\geq 0})^n$.  It is \emph{genuine} if it arises as in
  (\ref{eq-param-details}) for some ideal point of $V(\Lambda)$.
\end{definition}

\begin{remark}\label{rem-deg-spun}
  If $V$ comes from $D(\T)$ as discussed in Example~\ref{ex-DT}, then
  degeneration vectors correspond precisely to the $Q$-coordinates of
  certain spun-normal surfaces as follows.  In a tetrahedron with a
  preferred shape parameter $z$, we say the \emph{preferred quad} is
  the one with shift $+1$ along the preferred edge; equivalently, the
  preferred quad of the tetrahedron labeled as in
  Figure~\ref{fig-edge-param} is shown in
  Figure~\ref{fig-tet-with-spun}(a).  Now, in the notation of
  Section~\ref{sec-spun-normal}, consider the face $C'$ of $C(\T)$
  where all non-preferred quads have weight zero.  The relationship
  described in Section~\ref{sub-ideal-spun} between edge equations and
  $Q$-matching equations shows that if we focus on the subspace of
  preferred quads, the $Q$-matching equations are simply given by the
  $A$ part of the $M(\Lambda)$ matrix.  Thus degeneration vectors are
  precisely the integer points of $C'$, and each corresponds to a
  spun-normal surface.  So when $d$ is genuine, it is the
  $Q$-coordinates of a spun-normal surface $S(\dd)$ associated to an
  ideal point $\xi$ of $D(\T)$.  (Technical aside: we have not insisted
  that $f$ in (\ref{eq-hol-param}) is generically $1-1$, thus $d$ may
  be an integer multiple of the vector of the orders of zero of the
  $z$ at the corresponding ideal point $\xi$.  Hence, $S(\dd)$ may be
  some integer multiple of $S(\xi)$.) 
  
\end{remark}

Thus the key question for us here is when a given degeneration
vector is genuine.  The following is the main technical tool from
\cite{Ka}, and underlies our Theorems~\ref{thm-one-cusp} and
\ref{thm-filling}:
\begin{proposition}\label{prop-Kabaya-key}
  Suppose a degeneration vector $\dd$ is totally positive, i.e.~each
  $d_i > 0$.  If $A$ has rank $n-1$, then $\dd$ is genuine.  
\end{proposition}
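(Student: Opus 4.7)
The plan is to produce the holomorphic germ (\ref{eq-hol-param}) directly by the holomorphic implicit function theorem, with the rank-$(n-1)$ hypothesis on $A$ securing invertibility of the relevant Jacobian. Concretely, I would solve for the $u_i(t)$ as convergent power series, one coordinate being kept as a free parameter and the other $n-1$ determined by the defining equations.

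First, I would substitute $z_i(t) = t^{d_i} u_i(t)$ into the equation coming from each row $(a_{j1},\dots,a_{jn},b_{j1},\dots,b_{jn},c_j)$ of $M(\Lambda)$. Because $\dd \in \ker(A)$, the overall power of $t$ cancels, and the defining conditions reduce to
\[
F_j(u_1,\dots,u_n,t) \;:=\; \prod_i u_i^{a_{ji}} \prod_i (1 - t^{d_i} u_i)^{b_{ji}} - (-1)^{c_j} \;=\; 0.
\]
At $t=0$, using $d_i > 0$, these simplify to $\prod_i \alpha_i^{a_{ji}} = (-1)^{c_j}$, so I need an initial value $\alpha \in (\C^*)^n$ mapping to $\big((-1)^{c_j}\big)_j$ under the algebraic group homomorphism $\phi_A \maps (\C^*)^n \to (\C^*)^{n-1}$ defined by $A$. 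Since $A$ has rank $n-1$, the image of $\phi_A$ is a connected algebraic subgroup of dimension $n-1$ inside the connected $(n-1)$-torus, hence equals the whole torus; such an $\alpha$ therefore exists.

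Next, after reordering coordinates so that the first $n-1$ columns of $A$ are linearly independent, I would fix $u_n \equiv \alpha_n$ and view $F = (F_1,\dots,F_{n-1})$ as a function of $(u_1,\dots,u_{n-1}, t)$. A direct computation gives
\[
\frac{\partial F_j}{\partial u_i}\bigg|_{u=\alpha,\, t=0} \;=\; (-1)^{c_j}\, \alpha_i^{-1}\, a_{ji} \qquad (1 \leq i, j \leq n-1),
\]
which differs from the chosen $(n-1)\times(n-1)$ minor of $A$ only by left and right multiplication by invertible diagonal matrices, and is therefore invertible. The holomorphic implicit function theorem produces unique holomorphic $u_i(t)$ with $u_i(0)=\alpha_i \neq 0$ on some disc $D$ about the origin. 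For $t \in D \setminus \{0\}$ small, $z_i(t) = t^{d_i} u_i(t)$ is nonzero (since $u_i(0)\neq 0$) and $\neq 1$ (by continuity from $z_i(0)=0$), so the map $f(t) = (t^{d_i} u_i(t))_i$ carries $D \setminus \{0\}$ into $V$ and sends $0$ to the ideal point $(0,\dots,0) \in \Vbar$, certifying that $\dd$ is genuine.

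The main obstacle is the Jacobian step: the rank hypothesis on $A$ is precisely what is needed to isolate $n-1$ of the $u_i$ as unknowns determined by the remaining one, and to guarantee the relevant minor of $A$ is invertible. Without full rank, the defining equations would not be independent enough at the ideal point, and one would be forced into the more delicate (typically singular) local analysis of $\Vbar$ via Puiseux series or a desingularization.
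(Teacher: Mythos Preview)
Your proposal is correct and follows essentially the same approach as the paper: substitute $z_i = t^{d_i}u_i$, find an initial value $\alpha$ for the $u_i$ at $t=0$ by solving the monomial system $\prod_i \alpha_i^{a_{ji}} = (-1)^{c_j}$, and then apply the implicit (equivalently, inverse) function theorem using that the Jacobian in the $u$-variables at $t=0$ is a diagonal rescaling of an $(n-1)\times(n-1)$ minor of $A$. The only cosmetic differences are that the paper absorbs $u_1$ into $t$ (so it solves for $u_2,\dots,u_n$) whereas you freeze $u_n \equiv \alpha_n$ and solve for $u_1,\dots,u_{n-1}$, and that the paper finds $\alpha$ by exponentiating a solution of the linear system $A'x = c\pi i$ rather than invoking surjectivity of the torus homomorphism $\phi_A$; both arguments are equivalent and rest on the same rank hypothesis.
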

\noindent
We include a detailed proof of this in our current framework, as part
of a more general discussion of which degeneration vectors are genuine.   

\subsection{Genuine degeneration vectors}

Fix a degeneration vector $\dd$ which we wish to test for being
genuine.  For convenience, we reorder our variables so that $d_i = 0$
for precisely $i \geq k > 1$.  Taking our lead from the substitution in
\eqref{eq-param-details}, and arbitrarily folding $u_1$ into $t$, we consider
\[
 \pi \maps \C^n \to \C^n \mtext{given by} (t, u_2, \ldots, u_n)  \mapsto (t^{d_1}, t^{d_2} u_2, \ldots, t^{d_n} u_n)
\]
We set $W(\Lambda,\dd)$ to be the preimage of $V$ under $\pi$, regarded
as a subvariety of 
\[
U = \pi^{-1}\big(  (\Coo)^n \big) = (\C^*)^{n}  \setminus \{ t^{d_1} = 1, t^{d_i} u_i = 1\}
\]
Equivalently, using \eqref{eq-param-details}, we see $W(\Lambda, \dd)$ is the subset
of $U$ cut out by 
\begin{equation}\label{eq-def-var2}
  u_2^{a_2} \cdots u_n^{a_n} (1 - t^{d_1})^{b_1}(1-t^{d_2}u_2)^{b_2} 
\cdots (1 - t^{d_n}u_n)^{b_n}  = (-1)^c 
\end{equation}
for $(a, b, c) \in \Lambda$.  To examine whether $\dd$ is genuine, we need
to allow $t$ to be zero.  So consider
\[
\Ubar = \C \times (\C^*)^{n - 1}  \setminus \{ t^{d_1} = 1, t^{d_i} u_i = 1\}
\]
and let $\Wbar(\Lambda, \dd)$ be the closure of $W(\Lambda, \dd)$ in $\Ubar$.
Defining
\[
  \Wbar_0(\Lambda, \dd) =  \Wbar(\Lambda, \dd) \cap \{t = 0\}, 
\]
we have a simple test for when $\dd$ is genuine:
\begin{lemma}\label{lem-gen-char}
  If $\dd$ is genuine, then $\Wbar_0(\Lambda, \dd)$ is nonempty.  Almost
  conversely, if $\Wbar_0(\Lambda, \dd)$ is nonempty then a positive
  integer multiple of $\dd$ is genuine.
\end{lemma}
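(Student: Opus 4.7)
The plan is to prove the two implications separately using standard complex-analytic tools; the work is just in tracking how orders of vanishing behave under the map $\pi$.

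For the forward direction, I would start from a holomorphic parametrization $f \maps (D, 0) \to (\Vbar, p)$ witnessing that $\dd$ is genuine, with $z_i(t) = t^{d_i} u_i(t)$ and each $u_i(0) \neq 0$.  The naive lift to the $(t, u_2, \ldots, u_n)$-chart fails because $\pi$ requires $z_1 = t^{d_1}$ exactly rather than $t^{d_1} u_1(t)$.  I would remedy this by a local holomorphic change of parameter $s = t \cdot u_1(t)^{1/d_1}$, which is a biholomorphism at the origin because $u_1(0) \neq 0$ and satisfies $s^{d_1} = z_1$.  Re-expressing each $z_i$ in the new parameter $s$ yields $z_i = s^{d_i} \tilde u_i(s)$ with $\tilde u_i(0) = u_1(0)^{-d_i/d_1} u_i(0) \neq 0$ (and still $\neq 1$ when $d_i = 0$, since then $\tilde u_i(0) = u_i(0)$).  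The curve $s \mapsto (s, \tilde u_2(s), \ldots, \tilde u_n(s))$ then lies in $W$ for small $s \neq 0$ and converges at $s = 0$ to a point of $\Wbar_0$.

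For the converse, given $q = (0, u_2^0, \ldots, u_n^0) \in \Wbar_0$, I would invoke the analytic curve selection lemma applied to $W$ as a locally closed algebraic subset of $\C^n$, producing a holomorphic germ $g \maps (D, 0) \to (\Wbar, q)$ with $g(D \setminus \{0\}) \subset W$.  Writing $g(t) = (\tau(t), v_2(t), \ldots, v_n(t))$, the inclusion $W \subset U \subset (\C^*)^n$ forces $\tau(t) \neq 0$ for $t \neq 0$, so $\tau(t) = t^m v_1(t)$ for some integer $m \geq 1$ and a holomorphic unit $v_1$ at $0$.  Composing with $\pi$ then gives a holomorphic map into $\Vbar$ whose restriction to $D \setminus \{0\}$ lies in $V$, whose $i$-th coordinate vanishes to order $m d_i$ at $t=0$ with nonzero leading coefficient, and whose $i$-th coordinate for $d_i = 0$ has constant term $v_i(0) = u_i^0 \neq 1$ (this being the very reason $q \in \Ubar$).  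Since at least one $d_i > 0$, the limit point lies in $\Vbar \setminus V$, and so this parametrization witnesses that $m \dd$ is genuine.

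The main obstacle is the careful invocation of the curve selection lemma in the partially-compactified setting $\Ubar$ and the verification that $\tau \not\equiv 0$, both of which ultimately rest on the inclusion $W \subset (\C^*)^n$.  The integer $m$ that emerges is precisely what forces the word ``almost'' into the statement: absent additional information about the local structure of $\Wbar$ near $q$, I see no way to guarantee $m = 1$, so only some positive multiple of $\dd$ is known to be genuine.
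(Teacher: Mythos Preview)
Your argument is correct, and the forward direction is essentially identical to the paper's: both absorb $u_1$ into the parameter via a local $d_1$-th root to arrange $z_1 = s^{d_1}$ exactly, then read off a point of $\Wbar_0(\Lambda,\dd)$ as the limit.

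For the converse you take a genuinely different route.  The paper does not invoke a curve selection lemma; instead it proves the needed statement directly by an elementary slicing argument.  Starting from an irreducible component $Y$ of $\Wbar$ through the given point, it observes that $t$ is nonconstant on $Y$ (since $\Wbar$ is the closure of $W$, on which $t\neq 0$), and then inductively cuts $Y$ by generic affine hyperplanes through the point, chosen so that the intersection with $\{t=0\}$ drops dimension at each stage.  This produces an irreducible curve $C \subset \Wbar$ through the point on which $t$ is still nonconstant; passing to a smooth projective model of $C$ gives the holomorphic parametrization, and the order of vanishing of $t$ at the relevant preimage is the multiplier $m$.  Your approach is more concise and works fine provided the curve selection lemma is available in the complex-analytic form you need (a holomorphic arc into a locally closed complex algebraic set, not merely a real-analytic one); the paper's approach trades brevity for being entirely self-contained within basic algebraic geometry, avoiding any black-box citation.
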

The reader whose focus is on Theorems~\ref{thm-one-cusp} and
\ref{thm-filling} may skip the proof of Lemma~\ref{lem-gen-char}, as
the proof of Proposition~\ref{prop-Kabaya-key} does not depend on it.
\begin{proof}
  First suppose that $\dd$ is genuine.  Consider the analytic
  functions $u_i(t)$ in \eqref{eq-param-details}; by replacing $t$
  with $t \big(u_1(t)\big)^{-1/d_1}$, which is analytic near $t=0$, we
  may assume $u_1(t)$ is the constant function $1$.  Now, for small $t
  \neq 0$ the function
  \[
  t \mapsto \big(t, u_2(t), u_3(t), \ldots,u_n(t)\big)
  \]
  has image contained in $W(\Lambda, \dd)$.  Thus by continuity, the point $\big(0,
  u_2(0), \ldots, u_n(0)\big)$ is in $\Wbar_0(\Lambda, \dd)$, as
  needed.

  Now suppose instead $p$ is a point of $\Wbar_0(\Lambda, \dd)$.  Dropping
  $\Lambda$ and $\dd$ from the notation, we argue it is enough to show
  \begin{claim}
    There is an irreducible curve $C \subset \Wbar$ containing $p$ on which
    $t$ is nonconstant.
  \end{claim}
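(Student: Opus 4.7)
The plan is to realize $C$ as a curve inside an irreducible component of $\Wbar$ passing through $p$. The key preliminary observation is that $\Wbar$ is by definition the Zariski closure of $W$ in $\Ubar$, so its irreducible decomposition takes the form $\overline{W_1} \cup \cdots \cup \overline{W_k}$, where $W_1, \ldots, W_k$ are the irreducible components of $W$. Since $W \subset \{t \neq 0\}$, each $\overline{W_i}$ meets $\{t \neq 0\}$ in a Zariski-dense open subset. Consequently $t$ restricts to a non-constant regular function on every irreducible component of $\Wbar$, and every such component has positive dimension.

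Next, I would pick any irreducible component $Y$ of $\Wbar$ containing $p$ and set $d = \dim Y$, which satisfies $d \geq 1$ by the preceding paragraph. If $d = 1$, simply take $C = Y$ and finish. If $d \geq 2$, I would cut $Y$ down to a curve through $p$ by a standard hyperplane-section argument: at each stage, Krull's height theorem guarantees that intersecting the current variety with a hyperplane through $p$ that does not contain it yields a subvariety whose component through $p$ has dimension exactly one less. Iterating $d-1$ times and passing to the irreducible component through $p$ produces an irreducible curve through $p$ inside $Y$.

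The last thing to check is that $t$ is non-constant on this curve. Since $t$ is non-constant on $Y$, the closed subvariety $Y \cap \{t = 0\}$ has dimension at most $d-1 < d$, so for a generic choice of slicing hyperplanes through $p$ each intermediate slice will fail to be contained in $\{t = 0\}$. The resulting curve $C$ therefore meets $\{t \neq 0\}$ nontrivially, which is precisely the statement that $t$ is non-constant on $C$. I do not expect a serious obstacle here; the only mild subtlety is that the hyperplanes are required to pass through the specific (possibly singular) point $p$, but over the infinite base field $\C$ the family of hyperplanes through a given point is ample enough for the usual generic-position arguments to go through.
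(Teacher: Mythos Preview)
Your approach is correct and essentially the same as the paper's: pick an irreducible component $Y$ of $\Wbar$ through $p$ (on which $t$ is automatically nonconstant since $\Wbar$ is the closure of $W \subset \{t \neq 0\}$), then repeatedly slice by affine hyperplanes through $p$ to cut the dimension down to one while keeping $t$ nonconstant.  The paper makes the hyperplane choice explicit at each inductive step---taking an affine-linear function $g$ with $g(p)=0$ that is nonconstant on every irreducible component of $Y \cap \{t=0\}$, so that $\dim\big(Y\cap\{g=0\}\cap\{t=0\}\big)$ drops by one---whereas you appeal to genericity, but the content is identical.
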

  If the claim holds, let $\Ctil$ be a smooth projective model for
  $C$, with $f \maps \Ctil \to C$ the corresponding rational map.  If
  we take $s$ to be a holomorphic parameter on $\Ctil$ which is $0$ at
  some preimage of $p$, then $\pi \circ f \circ s$ shows that $m \cdot \dd$ is a
  genuine degeneration vector, where $m > 0$ is the order of zero of the
  $t$-coordinate of $f$ at $s = 0$.  
  
  To prove the claim, let $Y$ be an irreducible component of $\Wbar$
  containing $p$.  Since $\Wbar$ was defined by taking the closure of
  $W$ in $\Ubar$, it follows that $t$ is nonconstant on $Y$.  If $j =
  \dim Y > 1$, we will construct an irreducible subvariety $Y'$ of
  dimension $j-1$ which contains $p$ and on which $t$ is nonconstant.
  Repeating this inductively will produce the needed curve $C$.

  As $Y$ is irreducible, and $Y_0 = Y \cap \{ t= 0 \}$ is a nonempty
  proper algebraic subset, it follows that $\dim Y_0 = j - 1$.  There
  are coefficients $\alpha_i \in \C$ so that the polynomial
  \[
  g = \alpha_1 + \alpha_2 u_2 + \alpha_3 u_3 + \cdots + \alpha_n u_n
  \]
  is nonconstant on \emph{every} irreducible component of $Y_0$, and
  where $g(p) = 0$.  (If we temporarily view $p$ as the origin of our
  coordinate system, then any linear functional whose kernel fails to
  contain the linear envelope of any component of $Y_0$ works for
  $g$.)  Now set $Y' = Y \cap \{g = 0\}$, which contains $p$ and has
  dimension $j - 1$ as $g$ is nonconstant on $Y$.  Moreover $Y'
  \cap \{ t = 0 \} = Y_0 \cap \{ g = 0 \}$ has dimension $j - 2$ as $g$ is
  nonconstant on every component of $Y_0$.  Thus an irreducible
  component of $Y'$ containing $p$ has dimension $j -1$ and $t$ is
  nonconstant on it, as needed.
\end{proof}

\subsection{The first-order system}

Suppose that $\beta = (0, \beta_2, \beta_3,\ldots, \beta_n)$ is a point of $\Wbar(\Lambda,
\dd)$.  Substituting $t = 0$ into \eqref{eq-def-var2} we get that $\beta$ satisfies
\begin{equation}\label{eq-first-order}
  \beta_2^{a_2} \cdots \beta_n^{a_n} (1-\beta_k)^{b_k} \cdots (1-\beta_n)^{b_n} = (-1)^c \quad \mbox{for all $(a; b; c) \in \Lambda$.}
\end{equation}
We call the union of all such equations, together with $t=0$, the
\emph{first-order system}, and denote the corresponding subset of
$\{0\} \times (\C^*)^{k-2} \times (\Coo)^{n-k+1}$ by $W_0(\Lambda,
\dd)$.  Notice that $W_0(\Lambda, \dd)$ contains $\Wbar_0(\Lambda,
\dd)$, but is not a priori equal to it, as the latter may contain
points which are not in the closure of $W(\Lambda, d)$.  As the former
is easier to work with in practice, we show
\begin{lemma}
  \label{thm.genuine2} 
  Suppose $W_0(\Lambda, \dd)$ is nonempty and has dimension 0.  Then some
  multiple of $\dd$ is genuine.
\end{lemma}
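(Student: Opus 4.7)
The plan is to reduce to Lemma~\ref{lem-gen-char}, which says that a positive integer multiple of $\dd$ is genuine as soon as $\Wbar_0(\Lambda, \dd)$ is nonempty. Since $\Wbar_0 \subseteq W_0$ already, and $W_0$ is nonempty by hypothesis, the real content is the reverse containment: I need to show that every $\beta \in W_0$ lies in the closure $\Wbar$ of $W$ in $\Ubar$.

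To get at this, let $\Wtil \subseteq \Ubar$ be the closed subscheme cut out by the defining equations \eqref{eq-def-var2} themselves (cleared of denominators; on $\Ubar$ the relevant factors $u_i$ and $1 - t^{d_i}u_i$ are invertible, so this is harmless and gives polynomial equations equivalent to \eqref{eq-def-var2}). Then $\Wtil \cap U = W$ and $\Wtil \cap \{t=0\} = W_0$, so $\Wbar$ is exactly the union of those irreducible components of $\Wtil$ that meet $U$; the remaining components of $\Wtil$ lie entirely in $\{t=0\}$. The task becomes to show that every $\beta \in W_0$ lies on at least one component of the first kind.

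Krull's principal ideal theorem, applied twice, will do the job. Globally, $\Wtil$ is cut out in the smooth $n$-dimensional variety $\Ubar$ by $r = \rank(\Lambda)$ equations, and since $r \leq n-1$ by the standing hypothesis on varieties of gluing equation type, every irreducible component of $\Wtil$ has dimension at least $n - r \geq 1$. Locally at a point $\beta \in W_0$, the hypothesis that $W_0 = \Wtil \cap V(t)$ is zero-dimensional at $\beta$ combined with the local form of Krull gives $\dim_\beta \Wtil \leq \dim_\beta W_0 + 1 = 1$. Combining, every irreducible component $Y$ of $\Wtil$ through $\beta$ is a curve. Such a $Y$ cannot be contained in $\{t=0\}$, since it would then be a positive-dimensional subvariety of the zero-dimensional $W_0$; hence $Y$ is horizontal, $Y \cap U = Y \setminus \{t=0\}$ is a nonempty Zariski-open subset of $Y$ contained in $W$, and by irreducibility of $Y$ we get $\beta \in Y \subseteq \overline{Y \cap U} \subseteq \Wbar$. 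Therefore $\beta \in \Wbar_0$, and Lemma~\ref{lem-gen-char} delivers the conclusion.

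The one point requiring vigilance is that $W_0$ may be non-reduced at $\beta$, which would block a naive implicit-function-theorem argument based on the Jacobian of the first-order system having full rank. The virtue of the Krull estimate is that it is scheme-theoretic and insensitive to such issues, forcing one-dimensionality of $\Wtil$ at $\beta$ regardless; this is why the conclusion only asserts genuineness of \emph{some multiple} of $\dd$, since the curve $Y$ through $\beta$ may wind multiply over a neighborhood of $t = 0$.
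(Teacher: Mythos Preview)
Your argument is correct and follows essentially the same route as the paper: define $\Wtil$ by the $r \leq n-1$ equations \eqref{eq-def-var2}, use the dimension bound to see any component through $\beta \in W_0$ has dimension $\geq 1$, and use finiteness of $W_0$ to conclude such a component is not trapped in $\{t=0\}$, hence $\beta \in \Wbar_0$ and Lemma~\ref{lem-gen-char} applies. Your additional local Krull step pinning $\dim_\beta \Wtil$ to exactly $1$ is not needed (the paper only uses $\geq 1$), but it does no harm.
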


As we discuss later, in small examples this condition is easy to check
using Gr\"obner bases.  As with Lemma~\ref{lem-gen-char}, on which it
depends, it is not actually used to prove
Proposition~\ref{prop-Kabaya-key}.

\begin{proof}
  Consider the subvariety $\Wtil$ of $\Ubar$ cut out by the equations
  \eqref{eq-def-var2} coming from the $r$ rows of a fixed matrix
  $M(\Lambda)$ defining our original variety $V(\Lambda)$.  Then $\Wtil$
  contains both $W_0(\Lambda, \dd)$ and $\Wbar(\Lambda, \dd)$.  Let $p$ be a
  point of $W_0(\Lambda, \dd)$, and $Y$ an irreducible component of $\Wtil$
  containing $p$.  As $\Wtil$ is defined by $r \leq n-1$ equations and $\dim
  \Ubar = n$, the variety $Y$ must have dimension at least 1.  As $Y
  \cap \{t = 0\}$ is contained in the finite set $W_0(\Lambda, \dd)$, it
  follows that all but finitely many points of $Y$ are in $W(\Lambda,
  \dd)$.  Hence $p \in \Wbar(\Lambda, \dd)$, and Lemma~\ref{lem-gen-char}
  implies that a multiple of $\dd$ is genuine.
\end{proof}

We now have the needed framework to show Proposition~\ref{prop-Kabaya-key}.

\begin{proof}[Proof of Proposition~\ref{prop-Kabaya-key}]
  Let $d$ be a totally positive degeneration vector.  By hypothesis,
  the submatrix $A$ of $M(\Lambda)$ has rank $n-1$, and so in particular
  $M(\Lambda)$ has $n-1$ rows.  We reorder the variables so that the matrix
  $A'$ gotten by deleting the first column of $A$ also has rank $n-1$.

  To show $d$ is genuine, we start by examining the solutions $W_0(\Lambda,
  \dd)$ to the first-order equations.  As all $d_i > 0$, these
  equations are simply $t=0$ and 
  \begin{equation}\label{eq-simple-first-order}
    \beta_2^{a_2} \beta_3^{a_3} \cdots \beta_n^{a_n}  = (-1)^c \quad \mbox{for all $(a; b; c) \in \Lambda$.}
  \end{equation}
  where we require each $\beta_i \in \C^*$.  Note that any solution in
  $\C^{n-1}$ to the linear equations
  \begin{equation}\label{eq-linearized-first-order}
    a_2 x_2 + a_3 x_3 + \cdots + a_n x_n = c \pi i \quad \mbox{for all $(a; b; c) \in \Lambda$}
  \end{equation}
  gives rise to one of (\ref{eq-simple-first-order}) via the map
  $\C^{n-1} \to (\C^*)^{n-1}$ which exponentiates each coordinate.
  Since $\rank(A') = n -1$, the equations
  (\ref{eq-linearized-first-order}) have a solution and hence so
  do (\ref{eq-simple-first-order}).

  We will use the inverse function theorem to show that $d$ is
  genuine.  To set this up, let $\Wtil$ be the subvariety of $\C^n$
  with coordinates $(t, u_2, u_3,\ldots,u_n)$ cut out by the $n-1$
  equations (\ref{eq-def-var2}) coming from rows of the matrix $M(\Lambda)$.
  Fix a point $\beta \in W_0(\Lambda, \dd) \subset \Wtil$, and let $J$ be the $(n-1) \times
  n$ Jacobian matrix of these equations at $\beta$.  Let $J'$ be the
  submatrix of $J$ gotten by deleting the first column (which
  corresponds to $\partial/\partial t$).  If $J'$ has rank $n-1$, then the inverse
  function theorem implies that $\Wtil$ is a smooth curve at $\beta$.
  Moreover, this curve is transverse to $\{ t = 0 \}$ since $\rank(J') =
  n - 1$ forces any nonzero element of $\ker(J) = T_p\Wtil$ to have
  nonzero first component.

  Thus it remains to calculate the matrix $J'$.  As all $d_i > 0$, taking
  $\partial/\partial u_i$ of (\ref{eq-def-var2}) at $\beta$ gives $ a_i (-1)^c /
  \beta_i$.  Thus the columns of $J'$ are nonzero multiples of those of
  $A'$, and hence $\rank(J') = \rank(A') = n - 1$ as needed.  Thus $\dd$ is genuine.
\end{proof}

\subsection{Examples} 

Both hypotheses of Proposition~\ref{prop-Kabaya-key} are 
necessary, even for the weaker conclusion that the first-order
equations have a solution.  Here are two examples with $V(\Lambda) \neq 0$
which illustrate this.

First, for $n = 2$ consider the span $\Lambda$ of $(0,1;\ 1, -1;\ 1)$;
here, $V(\Lambda)$ is given by a single equation
\begin{equation}\label{eq-ex-1}
\frac{z_2(1 - z_1)}{1 - z_2} = -1
\end{equation}
which defines the nonempty plane conic $z_1 z_2 = 1$.  For the
degeneration vector $\dd = (1,0)$, the first-order system is
\[
\frac{\beta_2}{1 - \beta_2} = -1,
\]
which is equivalent to $0= -1$ and hence has no solutions. 
So $\dd$ is not genuine, even though $A = (0,
1)$ has maximal rank. This shows the total positivity of $\dd$ is
necessary for Proposition~\ref{prop-Kabaya-key}.

Second, again for $n = 2$, consider the span $\Lambda$ of $(0,0;\ 1, 1;\
-1)$.  Then $V(\Lambda)$ is again a nonempty plane conic, and is given by
\begin{equation}\label{eq-ex-2}
  (1 - z_1)(1 - z_2) = -1.
\end{equation}
Here, any $\dd$ is a degeneration vector since $A = (0,0)$,  so take
$\dd = (1,2)$.  Then the first-order system is simply $1 = -1$ which
has no solutions.  So $\dd$ is not genuine, even though $\dd$ is
totally positive. This shows that the condition that $\rank(A)$ is
maximal is also necessary for Proposition~\ref{prop-Kabaya-key}.

\section {Proof of Theorem~\ref{thm-one-cusp}}
\label{sec-proof-one-cusp}

In this section, we prove 
\begin{mainonebackwards}
  Let $\T$ be an ideal triangulation of a compact oriented \3-manifold
  $M$ with $\partial M$ a torus.  Suppose $S$ is a vertex spun-normal
  surface in $\T$ with non-trivial boundary.  If $S$ has a
  quad in every tetrahedron of $\T$, then $S$ is essential.
\end{mainonebackwards}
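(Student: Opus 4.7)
The strategy is to realize $S$ as arising from an ideal point of the deformation variety $D(\T)$, use Theorem~\ref{thm-ideal-pt-to-surface} to get a reduction to an essential surface of the same boundary slope, and then exploit the \emph{vertex} hypothesis to show no non-trivial reduction is possible, so $S$ itself must already be essential.

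Since $S$ has exactly one quadrilateral in each of the $n$ tetrahedra of $\T$, I first choose a preferred edge in each tetrahedron so that the preferred quad (in the sense of Remark~\ref{rem-deg-spun}) matches the quad occurring in $S$. Projecting $D(\T)$ along these preferred edges gives a variety $V(\Lambda) \subset (\Coo)^n$ as in Example~\ref{ex-DT}, and the $Q$-coordinates of $S$ restricted to preferred quads form a totally positive vector $\dd \in \Z_{>0}^n$ which is a degeneration vector for $V(\Lambda)$. To apply Proposition~\ref{prop-Kabaya-key}, I need $\rank(A) = n-1$. By Remark~\ref{rem-deg-spun}, on the subspace of preferred-quad weights the $Q$-matching equations reduce to $Ax=0$, so the face of $F(\T)$ through $S$ is locally cut out of $\R_{>0}^n$ by $\ker(A)$ and has dimension $n - \rank(A)$; the vertex surface hypothesis forces this to be a single ray, giving $\rank(A) = n-1$. (The count is consistent with $\rank(\Lambda) = n - 1$ from Example~\ref{ex-DT} for a single torus cusp.) Proposition~\ref{prop-Kabaya-key} then produces an ideal point $\xi$ of a curve $D \subset D(\T)$ to which $S$ (after doubling, if one-sided) is associated. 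Since $\partial S \neq \emptyset$ with some slope $\alpha$, Theorem~\ref{thm-ideal-pt-to-surface} implies that any reduction of $S$ has nonempty boundary of slope $\alpha$; in particular $S$ reduces to a nonempty essential surface of slope $\alpha$.

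The main obstacle, and the part that goes beyond Kabaya's theorem, is upgrading this to the statement that $S$ itself is essential. Assume for contradiction that $S$ admits a compressing disk, boundary-compressing disk, boundary-parallel component, or trivial $2$-sphere component, and perform the corresponding geometric reduction to produce a surface $S_1$. By Walsh's theorem (Section~\ref{sec-spun-normal}), $S_1$ can be spun-normalized to a spun-normal surface $\tilde S_1$ whose $Q$-coordinates are componentwise at most those of $S$. Then $S - \tilde S_1$ also lies in the cone $F(\T)$, so $S = \tilde S_1 + (S - \tilde S_1)$ is a decomposition in $F(\T)$. Because $S$ is a vertex surface, both summands must lie on the ray $\R_+ \cdot S$; and since the vertex surface $S$ is primitive in $\Z_{\geq 0}^{3n}$, this forces $\tilde S_1 \in \{0, S\}$. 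Theorem~\ref{thm-ideal-pt-to-surface} rules out $\tilde S_1 = 0$, since a reduction of $S$ has nonempty boundary of slope $\alpha$. Thus $\tilde S_1 = S$, but then $\chi(S) = \chi(\tilde S_1) = \chi(S_1)$, which contradicts the fact that a genuine (boundary) compression strictly raises $\chi$ by $1$ or $2$; the elimination of boundary-parallel or trivial sphere components produces a $\tilde S_1$ of strictly smaller topological complexity than $S$, again contradicting $\tilde S_1 = S$. The most delicate technical point to verify is precisely the claim that spun-normalization of $S_1$ yields a surface whose $Q$-coordinates are componentwise bounded by those of $S$, which will require careful tracking of the spun ends in the spirit of Section~\ref{sec-ends-spun} rather than a direct Haken-style weight argument.
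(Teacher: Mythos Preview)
Your first half---using the vertex hypothesis to get $\rank(A)=n-1$, applying Proposition~\ref{prop-Kabaya-key} to produce an ideal point $\xi$, and invoking Theorem~\ref{thm-ideal-pt-to-surface}---is exactly what the paper does (Lemma~\ref{lem-comes-from-ideal}).

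The second half, however, has a real gap, and the paper takes a different route.  Two problems with your argument: (i) Walsh's result, as quoted in Section~\ref{sec-spun-normal}, only spun-normalizes \emph{essential} surfaces that are not (semi-)fibers; the surface $S_1$ obtained by a single compression need not be essential, so you cannot invoke it.  (ii) Even granting some normalization procedure, you explicitly flag as unproven the componentwise inequality $Q(\tilde S_1)\leq Q(S)$, and this is the entire crux.  In ordinary normal surface theory such a bound follows from the fact that normalization never increases edge-weights, but for spun-normal surfaces the edge-weights are infinite, and there is no obvious substitute; indeed the paper does not establish any such inequality.

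The paper instead uses $S$ as a \emph{barrier} (Lemma~\ref{lem-act-essen}).  After compressing, it first reduces the result to a surface $S_2$ essential in the complement of $S$, then normalizes $S_2$ in the truncated cell structure $\Tbar$ of $M$---not via Walsh---while working in $M$ cut along $S$.  Normalization in this cut-open manifold keeps the surface disjoint from $S$ because it never increases intersection with edges; some care is then needed to spin the resulting $\Tbar$-normal surface back into a spun-normal surface $S'$, and this is where the boundary-arc analysis of Section~\ref{sec-ends-spun} actually enters.  Disjointness forces $S'$ to lie in $L(\T,Q)$, and then $\dim L(\T,Q)=1$ gives $S'=S$ directly, without any coordinate inequality.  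Your strategy of bounding $Q$-coordinates and decomposing $S$ as a sum is replaced by the cleaner observation that disjointness plus a one-dimensional solution space forces equality.
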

The requirement that $\partial M$ is a single torus, rather than
several, is simply for notational convenience; the proof works
whenever $S$ has at least one non-trivial boundary component.

We first rephrase Theorem~\ref{thm-one-cusp} in the form in which we
will prove it.  Throughout this section, let $\T$ be an ideal
triangulation as in Theorem~\ref{thm-one-cusp}.  Recall from
Section~\ref{sec-spun-normal} if $\T$ has $n$ tetrahedra, then the
$Q$-coordinates of a spun-normal surface are given by a vector in
$\R^{3n}$ that lives in the linear subspace $L(\T)$ of solutions to
the $Q$-matching equations.  Specifically, each spun-normal surface
gives an integer vector in the convex cone $C(\T) = L(\T) \cap \R_+^{3
  n}$.

Suppose we fix a preferred type of quad in each tetrahedron; such a
choice will be denoted by $Q$.  Let $\R_Q^n \subset \R^{3n}$ be the
corresponding subspace where all non-preferred quads have weight $0$.
Define $L(\T, Q) = L(\T) \cap \R^n_Q$ and $C(\T,Q) = C(\T) \cap \R^n_Q$.
We will show the following:
\begin{theorem}\label{thm-one-cusp-tech}
  Suppose $S$ is a spun-normal surface with non-empty boundary which
  has a quad in every tetrahedron. Let $Q$ be the corresponding quad
  type.  If $\dim L(\T, Q) = 1$, then $S$ is essential.
\end{theorem}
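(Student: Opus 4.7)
The plan is to realize $S$ as the spun-normal surface associated to an ideal point of the deformation variety $D(\T)$, then invoke Theorem~\ref{thm-ideal-pt-to-surface}.

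First I would translate the hypotheses into the algebraic framework of Section~\ref{sec-ideal-gluing}. In each tetrahedron $\Delta_i$ of $\T$, choose the preferred edge so that the $Q$-quad of $S$ has shift $+1$ along it, as in Figures~\ref{fig-tet-with-spun} and~\ref{fig-edge-param}. The corresponding shape parameters $z_i$ realize $D(\T)$ as a variety $V(\Lambda) \subset (\Coo)^n$ of gluing-equation type, via Example~\ref{ex-DT}. By Remark~\ref{rem-deg-spun}, the degeneration vectors of $V(\Lambda)$ are precisely the nonnegative integer vectors in $L(\T, Q)$, and under the restriction to preferred quads the $Q$-matching equations become the matrix $A$ of (\ref{eq-MLambda}), so $L(\T, Q) \cong \ker(A)$.

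Let $\dd \in \Z_{>0}^n$ be the vector of $Q$-quad counts of $S$. The hypothesis that $S$ has a quad in every tetrahedron makes $\dd$ totally positive, while $\dim L(\T, Q) = 1$ is exactly the rank condition $\rank(A) = n-1$. Proposition~\ref{prop-Kabaya-key} then yields that $\dd$ is genuine: there is an irreducible curve $D \subset D(\T)$ and an ideal point $\xi$ of $D$ at which the orders of vanishing of the $z_i$ form a positive integer multiple of $\dd$. In the language of Section~\ref{sub-ideal-ess}, $S$ is therefore \emph{associated} to $\xi$; if $S$ is one-sided, replace it by $2S$, which is harmless since $S$ is essential iff $2S$ is. Because $S$ has nonempty boundary of some slope $\alpha$, Theorem~\ref{thm-ideal-pt-to-surface} guarantees that every reduction of $S$ still has boundary of slope $\alpha$, and that $S$ reduces to a nonempty essential surface $S^{\ast}$.

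The remaining step is to upgrade this to $S$ \emph{itself} being essential, and this is what I expect to be the main obstacle. My plan is to use $\dim L(\T, Q) = 1$ a second time: apply Walsh's theorem to spun-normalize $S^{\ast}$ to an essential spun-normal surface $S'$. Since the reduction producing $S^{\ast}$ only compresses $S$, boundary-compresses, or deletes pieces, the quad types of $S'$ should be a subset of those of $S$, so the $Q$-coordinates of $S'$ lie in $\R^n_Q \cap L(\T) = L(\T, Q)$. As this space is one-dimensional and $S' \ne 0$, the surface $S'$ must be a positive rational multiple of $S$; thus $S$ and $S'$ are both positive integer multiples of the common primitive spun-normal surface on the ray, so essentiality of $S'$ transfers to $S$ (parallel copies of an essential surface remain essential). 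The delicate point is verifying that Walsh's spun-normalization of $S^{\ast}$ does not introduce quad types outside of $Q$; this will require a careful review of how each reduction move (compression, boundary compression, and deletion of trivial or boundary-parallel components) interacts with the spun-normalization procedure.
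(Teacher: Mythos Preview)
Your first two steps match the paper exactly: you correctly recover Lemma~\ref{lem-comes-from-ideal} (via Proposition~\ref{prop-Kabaya-key}) and then feed the ideal point into Theorem~\ref{thm-ideal-pt-to-surface}.

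The gap is in your third step. You propose to spun-normalize the essential reduction $S^{\ast}$ via Walsh's theorem and then argue that the resulting $S'$ still has quad type contained in $Q$. There is no reason this should hold: Walsh's normalization is a global procedure and does not remember the quads of $S$; after a compression the surface is no longer normal, and re-normalizing it can introduce entirely new quad types. You flag this yourself as ``the delicate point,'' but it is not a detail to be checked --- it is simply false in general. (There are also side issues: Walsh's result excludes fibers and semi-fibers, and the paper deliberately avoids assuming $M$ is hyperbolic, so Walsh may not even apply.)

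The paper's Lemma~\ref{lem-act-essen} fixes this by a barrier argument rather than by invoking Walsh. One assumes $S$ has a genuine compressing disc, compresses once to get $S_1$ disjoint from $S$, reduces $S_1$ to $S_2$ essential in the complement of $S$, and then normalizes $S_2$ \emph{in the cell structure of $M$ cut along $S$}. Because $S$ acts as a barrier, the normalized surface $S_3$ remains disjoint from $S$ and can then be spun to a spun-normal $S'$ disjoint from $S$; disjointness forces $S'$ to have quad type compatible with $Q$. Now $\dim L(\T,Q)=1$ gives $S'=S$ in $Q$-coordinates, contradicting the genuine compression. The key idea you are missing is to normalize in the complement of $S$ so that quad-compatibility comes for free from disjointness, rather than hoping it survives an uncontrolled normalization.
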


\begin{proof}[Proof of Theorem~\ref{thm-one-cusp} from Theorem~\ref{thm-one-cusp-tech}]
  Let $S$ be a vertex spun-normal surface which has a quad in every
  tetrahedron; we need to show that $\dim L(\T, Q) = 1$.  Since $C(\T) =
  L(\T) \cap \BR_+^{3n}$, a face (of any dimension) of $C(\T)$
  corresponds to setting some subset of the coordinates to 0.  Thus
  since $S$ is a vertex solution, there are coordinates $u_i$ so
  that $C(\T) \cap \{u_1 = \cdots = u_k = 0\}$ is the ray $\BR_+ \cdot S$.  Let
  $Q$ be the unique quad type compatible with $S$.  As $S$ has nonzero
  weight on every quad in $Q$, we must have 
  \begin{equation}\label{eq-vertex}
    \BR_+ \cdot S  = C(\T) \cap \setdef{u_i = 0}{u_i\not\in Q} = C(\T) \cap \R^n_Q = C(\T, Q)
  \end{equation}
  Next we argue that $C(\T, Q) = L(\T, Q) \cap \R^{3n}_+$ has the same
  dimension as $L(\T,Q)$ itself.  This follows since all
  $\R^n_Q$-coordinates of $S$ are positive, and thus all nearby points
  to $S$ in $L(\T,Q)$ are also in $C(\T,Q)$.  Thus $\dim C(\T, Q) =
  \dim L(\T, Q)$.  As $\dim C(\T, Q) = 1$ by (\ref{eq-vertex}), the
  fact that $\dim C(\T, Q) = \dim L(\T, Q)$ shows that the hypotheses
  of Theorem~\ref{thm-one-cusp} imply those of
  Theorem~\ref{thm-one-cusp-tech}.  (In fact, the hypotheses of the
  two theorems are equivalent.)
\end{proof} 

We break the proof of Theorem~\ref{thm-one-cusp-tech} into two lemmas. 
\begin{lemma}\label{lem-comes-from-ideal}
  Suppose $S$ is a spun-normal surface with a quad in every
  tetrahedron.  Suppose that $\dim L(\T, Q) = 1$ for the quad type $Q$
  determined by $S$.  Then there is an ideal point $\xi$ of $D(\T)$ so
  that $S$ is associated to $\xi$.
\end{lemma}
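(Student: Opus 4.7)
The plan is to reduce the lemma to an immediate application of Proposition~\ref{prop-Kabaya-key}. The dictionary established in Remark~\ref{rem-deg-spun} translates between spun-normal surface data (living in $L(\T, Q) \subset \R^n$) and algebraic data (living in $\ker(A) \subset \R^n$, where $A$ is the left block of the matrix $M(\Lambda)$ for a variety $V(\Lambda)$ of gluing-equation type that represents $D(\T)$). The whole argument is then a matter of checking that the hypotheses on $S$ translate into those of Proposition~\ref{prop-Kabaya-key}.

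First I would fix, in each tetrahedron of $\T$, the preferred shape parameter to be the one whose corresponding preferred quad (in the sense of Remark~\ref{rem-deg-spun}) is the $Q$-quad. By Example~\ref{ex-DT}, projection onto these preferred parameters identifies $D(\T)$ with a variety $V(\Lambda) \subset (\Coo)^n$, and by Remark~\ref{rem-deg-spun} the subspace $L(\T, Q)$ is precisely $\ker(A)$. Now let $\dd \in \Z_{\geq 0}^n$ be the vector of preferred-quad weights of $S$. The assumption that $S$ has a quad in every tetrahedron says that $\dd$ is totally positive, and the assumption $\dim L(\T, Q) = 1$ says that $\rank(A) = n-1$. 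So $\dd$ is a degeneration vector satisfying exactly the hypotheses of Proposition~\ref{prop-Kabaya-key}, and the proposition concludes that $\dd$ is genuine.

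Being genuine means there is a holomorphic parametrization of the form (\ref{eq-param-details}) limiting to an ideal point $p$ of $\Vbar(\Lambda) \setminus V(\Lambda)$ whose orders of vanishing along the coordinates are $\dd$. Because the injection $D(\T) \hookrightarrow V(\Lambda)$ is inverted rationally by the formulas of (\ref{eq-tet-shape}), this arc lifts canonically from $V(\Lambda)$ to $D(\T)$ and therefore determines an ideal point $\xi$ on (the smooth projective model of) the component of $D(\T)$ that it traces out. By Remark~\ref{rem-deg-spun}, the spun-normal surface $S(\dd)$ with $Q$-coordinates equal to $\dd$ on the preferred quads and zero elsewhere is associated to $\xi$. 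Since $S$ has quad type $Q$, its $Q$-coordinates take exactly this form, and hence $S = S(\dd)$ is associated to $\xi$, as required.

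I expect the only real subtlety is keeping the dictionary of Remark~\ref{rem-deg-spun} straight, in particular making sure that the ``preferred quad'' in the normal-surface setup corresponds, under the projection $D(\T) \hookrightarrow V(\Lambda)$, to the choice of edge distinguished when writing down $M(\Lambda)$. Once that bookkeeping is in place, both hypotheses of Proposition~\ref{prop-Kabaya-key} are essentially verbatim translations of the hypotheses of the lemma, and Kabaya's proposition does all the real work of producing the ideal point.
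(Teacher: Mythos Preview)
Your proposal is correct and follows essentially the same route as the paper: choose preferred shape parameters so that $Q$ corresponds to the preferred quads, invoke Example~\ref{ex-DT} and Remark~\ref{rem-deg-spun} to identify $L(\T,Q)$ with $\ker(A)$, then feed the totally positive degeneration vector $\dd$ (coming from $S$) and the rank condition $\rank(A)=n-1$ into Proposition~\ref{prop-Kabaya-key} to conclude via Remark~\ref{rem-deg-spun} that $S$ is associated to an ideal point. Your write-up is slightly more explicit than the paper's about lifting the holomorphic arc from $V(\Lambda)$ back to $D(\T)$, but the argument is otherwise the same.
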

\begin{lemma} \label{lem-act-essen} Suppose $S$ is a connected,
  two-sided, spun-normal surface with a quad in every tetrahedron.
  Suppose that $\dim L(\T, Q) = 1$ for the quad type $Q$ determined by
  $S$.  If every reduction of $S$ has nonempty boundary, then $S$ is
  essential.
\end{lemma}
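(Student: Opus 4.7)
The plan is a proof by contradiction: assume $S$ is not essential and show that this forces $S$ to coincide with a spun-normal surface of strictly larger Euler characteristic, which is impossible.

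First I would invoke Lemma \ref{lem-comes-from-ideal} to realize $S$ as associated to an ideal point $\xi$ of an irreducible curve $D \subset D(\T)$. Since $S$ is two-sided with non-empty boundary, Theorem \ref{thm-ideal-pt-to-surface} then produces a non-empty essential reduction $S^*$ of $S$ whose boundary slope matches that of $S$. The hypothesis that every reduction of $S$ has non-empty boundary, combined with the connectedness of $S$, rules out $S$ being boundary-parallel and rules out the reduction collapsing $S$ purely by removing trivial spheres or boundary-parallel components. Thus the reduction sequence must contain at least one compression or boundary-compression, so $\chi(S^*) > \chi(S)$ strictly.

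Next I would apply Walsh's spun-normalization theorem (cited in Section \ref{sec-spun-normal}) to replace $S^*$ by an isotopic spun-normal surface $T$, so $\chi(T) = \chi(S^*) > \chi(S)$ and $[T] = [S]$ in $H_2(M,\partial M;\Z)$. The crucial claim is that $T$ has $Q$-coordinates in the quad type $Q$. The justification should track the Bass-Serre tree from the proof of Theorem \ref{thm-ideal-pt-to-surface}: the reduction $S \to S^*$ there is realized by an equivariant modification of the map $\Mtil \to \TN$ that keeps $S^*$ dual to the tree $\TN$ associated to $\xi$; if Walsh's normalization can be performed while preserving this duality, then $T$ is again associated to $\xi$, and by Remark \ref{rem-deg-spun} this places its $Q$-coordinates into the quad type $Q$.

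To conclude, let $S_0$ be the primitive spun-normal surface on the ray $\R_+ \cdot S \subset L(\T, Q)$. Since $\dim L(\T, Q) = 1$, both $S$ and $T$ are positive integer multiples of $S_0$: write $S = k S_0$ and $T = j S_0$. Because $\partial S \neq \emptyset$, the class $[S_0]$ is non-zero in $H_2(M,\partial M;\Z)$, and by Poincar\'e-Lefschetz duality this group is isomorphic to $H^1(M;\Z)$, hence torsion-free. Therefore $[T] = [S]$ forces $j = k$, whence $T = S$ as spun-normal surfaces. This contradicts $\chi(T) > \chi(S)$, so $S$ must be essential. The main obstacle I anticipate is confirming the duality-preservation claim above: the tree-theoretic argument in the proof of Theorem \ref{thm-ideal-pt-to-surface} only controls the reduction down to $S^*$, and a further argument is needed to ensure that Walsh's characteristic-submanifold normalization does not disturb the association with $\xi$.
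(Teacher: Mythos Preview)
Your proposal has a genuine gap at exactly the point you flag, and it cannot be repaired along the lines you suggest.  First, Walsh's spun-normalization theorem requires $M$ to be hyperbolic and excludes fibers and semi-fibers; the lemma has no such hypothesis, and indeed the paper explicitly remarks (just before the proof) that one \emph{cannot} appeal to Walsh here.  Second, even granting normalization, there is no mechanism by which Walsh's characteristic-submanifold procedure would preserve duality with the tree $\TN$, and hence no reason the resulting surface should land in the quad type $Q$.  Your tree-duality idea controls the reduction $S \to S^*$, but Walsh's normalization is a completely separate isotopy that need not respect the map $\Mtil \to \TN$ at all.

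The paper's argument avoids both problems by a barrier technique that makes no use of Lemma~\ref{lem-comes-from-ideal}, the ideal point, or Walsh.  One compresses $S$ once to get $S_1$ disjoint from $S$, reduces further \emph{in the complement of $S$} to an essential $S_2$ (still with nonempty boundary, by the hypothesis of the lemma), and then normalizes $S_2$ in the truncated cell structure $\Tbar$ using $S$ as a barrier: since $S$ is already normal, the normalization of $S_2$ never crosses it.  After some care to control $\partial S_2$ so that the normalized surface $S_3$ can be spun, one obtains a spun-normal $S'$ disjoint from $S$.  Disjointness forces compatible quads, so $S' \in L(\T,Q)$; since $\dim L(\T,Q)=1$ and both surfaces are nonempty, connected, two-sided, and not vertex-linking, their $Q$-coordinates agree and they are normally isotopic---contradicting the genuine compression.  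The key idea you are missing is thus to work entirely in the complement of $S$ rather than to normalize globally and then argue about quad types afterward.
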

We establish these lemmas below after first deriving the
theorem from them.

\begin{proof}[Proof of Theorem~\ref{thm-one-cusp-tech}]
  First, we reduce to the case that $S$ is two-sided and connected.
  Let $S_0$ be the spun-normal surface corresponding to the primitive
  lattice point on the ray $\R_+ \cdot S$, i.e.~$S_0 = (1/g) S$ where
  $g$ is the $\gcd$ of the coordinates of $S$.  The surface $S_0$ must
  be connected, since if not it would be the sum of two surfaces in
  $C(\T, Q)$ which is just $\R_+ \cdot S$ since $\dim L(\T, Q) = 1$.
  Now, the surface $S$ is essential if and only if $S_0$ is, so we shift
  focus to $S_0$.  If $S_0$ is one-sided, then by definition $S_0$ is
  essential if and only if $2 \cdot S_0$ is, and we focus on the latter
  (which is still connected).  Thus we have reduced to the case that
  $S$ is connected and two-sided.

  Now by Lemma~\ref{lem-comes-from-ideal}, there is an ideal point
  $\xi$ of $D(\T)$ so that $S$ is associated to $\xi$.  By
  Theorem~\ref{thm-ideal-pt-to-surface}, the surface $S$ can be
  reduced to a nonempty essential surface $S'$ with nonempty boundary.
  By Lemma~\ref{lem-act-essen}, the surface $S = S'$ and $S$ is
  essential, as required.
\end{proof}

\begin{proof}[Proof of Lemma~\ref{lem-comes-from-ideal}]
  In each tetrahedron $\Delta$ of $\T$, focus on the edge which has
  shift +1 with respect to the quad that is in $S$.  By
  Example~\ref{ex-DT}, if we focus solely on the corresponding shape
  parameters, this expresses the deformation variety $D(\T)$ as a
  variety $V(\Lambda)$ of gluing equation type.  Moreover, as
  discussed in Remark~\ref{rem-deg-spun}, the degeneration vectors
  $\dd$ of $V(\Lambda)$ correspond precisely to the spun-normal
  surfaces in $C(\T, Q)$.  Indeed, the $Q$-matching equations cutting
  out $L(\T, Q)$ from $\R_Q^n$ are equivalent to those given by the
  $A$ submatrix of $M(\Lambda)$.

  Let $\dd$ be the degeneration vector corresponding to the surface
  $S$.  By hypothesis $\dim L(\T,Q) = 1$, and so by the
  connection above we know $\rank(A) = n - 1$.  Thus by
  Proposition~\ref{prop-Kabaya-key}, the degeneration vector $\dd$ is
  genuine, and so by Remark~\ref{rem-deg-spun} the surface $S$ is
  associated to some ideal point $\xi$ of $D(\T)$, as needed.
\end{proof}
  
Before proving Lemma~\ref{lem-act-essen}, we sketch the basic idea,
which was suggested to us by Saul Schleimer and Eric Sedgwick.  If $S$
compresses, do so once to yield a surface $S'$ which is disjoint from
$S$.  Now normalize $S'$ to $S''$; while this may result in additional
compressions, the surface $S''$ is nonempty by hypothesis.  The
original normal surface $S$ acts as a barrier during the normalization
of $S'$ \cite{Rubinstein1997}, and so $S''$ is disjoint from $S$.  Thus the
quads in $S''$ are compatible with those of $S$.  Now as $\dim L(\T,Q)
= 1$, we must have that $S = S''$ and so the initial compression was
trivial and hence $S$ is essential. 

If $S$ was an ordinary (non-spun) normal surface, this sketch would
essentially be a complete proof.  Unfortunately, the spun-normal case
introduces some additional technicalities, particularly as we are not
assuming that $M$ is hyperbolic, and hence we can't appeal directly to
\cite{Walsh} to ensure that $S'$ can be normalized at all. 

\begin{proof}[Proof of Lemma~\ref{lem-act-essen}]
  As in Section~\ref{sec-ends-spun}, we pick a neighborhood $\Nv$ of
  the vertex $\v$ of $\T$ so that $S$ meets the torus $\Bv = \partial
  \Nv$ in nonseparating curves with consistent canonical orientations.
  We now identify $M$ with $\T \setminus \mathrm{int}(\Nv)$.   Except
  for the very end of the proof, we will focus on $S \cap M$ and so
  denote it simply by $S$.

  If $S$ is not essential as the lemma claims, there are three
  possibilities:
  \begin{enumerate} 
  \item \label{en-compresses} $S$ has a
    genuine compressing disc $D$.
  \item \label{en-bdry-comp} $S$ is incompressible but has a genuine
    $\partial$-compression $D$.
  \item \label{en-bdry-parallel} $S$ is boundary parallel.  
  \end{enumerate}
  Case (\ref{en-bdry-parallel}) is ruled out since $S$ can be reduced
  to an essential surface.  In case (\ref{en-bdry-comp}), consider the
  arc $\alpha = D \cap \partial M$.  If the end points of $\alpha$ are
  on the same component of $\partial S$, then the incompressibility of
  $S$ forces $D$ to be a trivial $\partial$-compression.  When instead
  $\alpha$ joins two components of $\partial M$, incompressibility
  means that the connected surface $S$ is an annulus.  But then
  compressing $S$ along $D$ gives a disc $S'$ whose boundary is
  inessential in $\partial M$.  This contradicts that that every
  reduction of $S$ yields a surface with nonempty boundary.

  Thus it remains to rule out (\ref{en-compresses}).  Now let $D$ be a
  compressing disc for $S$.  Compress $S$ along $D$ and slightly
  isotope the result to yield a surface $S_1$ disjoint from $S$.  Now
  further compress and otherwise reduce $S_1$ in the complement of $S$
  to give a surface $S_2$ which is disjoint from $S$ and essential in
  its complement.  By hypothesis, $S_2$ has non-empty boundary.  If
  $S_2$ is not connected, replace it by any connected component with
  non-empty boundary.

  Now $M$ has a cell structure $\Tbar$ coming from $\T$ consisting of
  truncated tetrahedra, and note that $S$ is normal with respect to
  $\Tbar$.  Our goal is to normalize $S_2$ in $\Tbar$ and then spin
  the result into a spun-normal surface.  However, not every normal
  surface in $(M, \Tbar)$ can be spun.  The boundary curves need an
  orientation which satisfies the condition in Section~1.12 of
  \cite{Till1}, and that orientation must be compatible with the
  ``tilt'' of the normal discs (see Figure~\ref{fig-unspinable}).
  \begin{figure}
    \begin{center}
      \vspace{0.4cm}
    \includegraphics[scale=1.05]{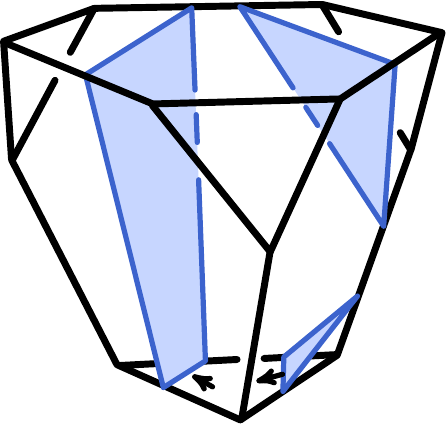} \hspace{0.5cm}
    \includegraphics[scale=0.85]{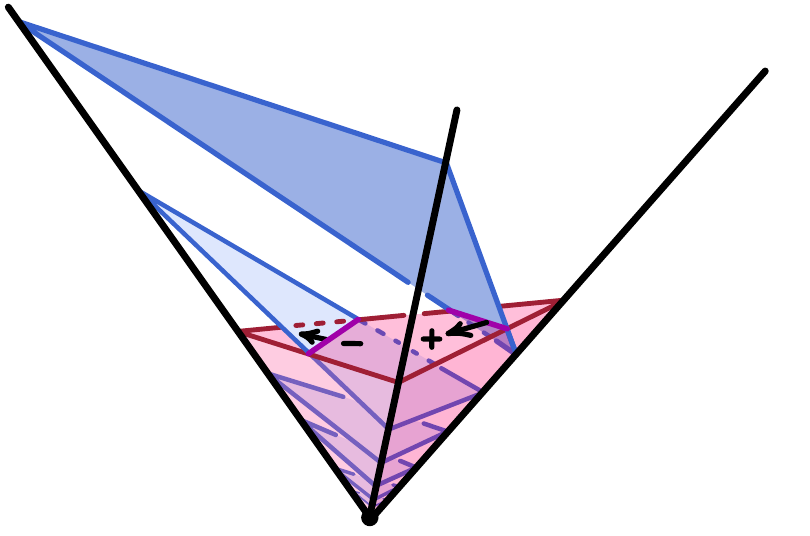}
    \end{center}
  \caption{Some normal discs in a truncated tetrahedron, with a choice of
      orientation for the arcs meeting $\partial M$.
      Comparing with the copy of Figure~\ref{fig-bdry-orient} at right,
      we see the quad can be spun but not the lower triangle.  }
    \label{fig-unspinable}
  \end{figure}
  To finesse this issue, we isotope $S_2$ in the complement of $S$ so
  that $\partial S_2$ consists of normal curves, each of which lies just to
  the positive side of a parallel curve in $\partial S$.
 
  Now normalize $S_2$ with respect to $\Tbar$ to yield a surface $S_3$
  (see e.g.~\cite[Ch.~3]{Matveev2007}).  As mentioned above, this
  normalization takes place in the complement of $S$.  A concise way
  of seeing this is to cut $M$ open along $S$ to yield $M'$ with a
  cell structure $\Tbarprime$.  If we normalize $S_2$ in $M'$ with
  respect to $\Tbarprime$, the result is necessarily normal with
  respect to $\Tbar$.  Moreover the final surface is still disjoint
  from the two copies of $S$ in $\partial M'$ since normalizing never
  increases the number of intersections of the surface with an edge.

  The normalization process may result in compressions or other
  reductions to the surface.  However, since $S_2$ is essential in
  $M'$, it follows that $S_3$ has the same topology as $S_2$.  (If
  $M'$ is irreducible, then $S_2$ and $S_3$ are of course isotopic.)
  Focus on a component of $\partial M \cap M'$, which is an annulus
  $A$.  The components of $\partial S_2$ in $A$ are all normally
  isotopic, and moreover intersect any 2-dimensional face of
  $\Tbarprime$ at most once (see the right half of
  Figure~\ref{fig-unspinable}).  Thus the first $\partial$-compression
  that occurs while normalizing $S_2$ must join two distinct boundary
  components, reducing the total number of boundary components.  As
  $S_2$ and $S_3$ have the same number of boundary components, there
  can be no $\partial$-compressions during normalization and so
  $\partial S_2$ and $\partial S_3$ are setwise the same.
  \begin{figure}[bt]
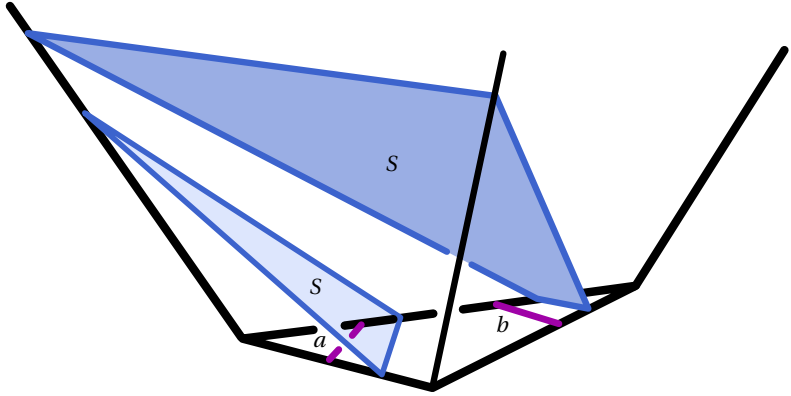

    \begin{cxyoverpic}{(185,93)}{scale=1.6}{images/new-surface}
    ,(117,16)*{b}
    ,(74,12)*{a}
    ,(91,54)*{S}
    ,(73,25)*{S}
\end{cxyoverpic}
\caption{ There are at most two kinds of normal arcs in $\partial
  S_3$, labeled here $a$ and $b$.  From the their position relative to
  the surface $S$, any normal disc of $S_3$ adjacent to $\partial S_3$
  must be parallel to those in $S$.}
    \label{fig-S-vs-S3}
  \end{figure}

  Focus now on one normal arc $\alpha$ in $\partial S_3$.  By construction,
  it lies just to the positive side of a normal arc $\alpha'$ of $\partial
  S$.  If $\n$ is a normal disc of $S_3$ with $\alpha$ as an edge, from
  Figure~\ref{fig-S-vs-S3} we see that $\n$ must be parallel to the normal
  disc of $S$ along $\alpha'$.  Hence, we can build a spun-normal surface
  $S'$ from $S_3$ which is disjoint from $S$ by attaching half-open
  annuli in $\Nv$ which are combinatorially parallel to those of $\Nv
  \cap S$.

  Now $S$ and $S'$ are disjoint spun-normal surfaces in $\T$ and hence
  they have compatible quad types.  Thus $S'$ is in $L(\T, Q)$.  We
  know that both $S$ and $S'$ are nonempty, two-sided, connected, and
  not vertex linking tori.  Hence as $L(\T, Q)$ is one dimensional,
  the $Q$-coordinates of $S$ and $S'$ must be the same.  Hence they
  are normally isotopic and so they have the same topology.  This
  contradicts that we started with a genuine compression of $S$,
  ruling out (a).  Hence $S$ is essential.
\end{proof}

\section{Slopes of alternating knots}
\label{sec-alt}

In this section, we prove Theorem~\ref{thm-alt-nonint} by showing that
the alternating knot $10_{79} = 10a78$ has nonintegral boundary
slopes, namely $10/3$ and $-10/3$.  Additional nonintegral slopes of
alternating knots are given in Table~\ref{table-alt}.  Let $M$ denote
the complement of $10_{79}$; as $M$ is amphichiral, we simply show
that $10/3$ is a boundary slope.
\begin{table}{
    \vspace{1cm}

    \footnotesize
    \begin{center}
      \begin{tabular}{rlrlrl}
10a8:&$-20/3$ & 11a275:&$-20/3$ & 12a120:&$-52/3$\\
10a78:&$-10/3$, $10/3$ & 11a281:&$-28/3$ & 12a125:&$-10/3$, $-2/3$, $2/3$\\
10a95:&$4/3$ & 11a284:&$-2/3$ & 12a126:&$-2/3$\\
11a17:&$-2/3$ & 11a296:&$34/3$ & 12a127:&$-22/3$\\
11a19:&$-2/3$ & 11a299:&$-4/3$ & 12a132:&$-2/3$\\
11a25:&$-2/3$ & 11a300:&$-28/3$, $34/3$ & 12a134:&$2/3$\\
11a38:&$10/3$ & 11a301:&$-22/3$, $34/3$ & 12a154:&$-20/3$\\
11a49:&$-28/3$ & 11a313:&$-20/3$ & 12a155:&$4/3$\\
11a102:&$-16/3$ & 11a314:&$-2/3$ & 12a162:&$-20/3$\\
11a113:&$2/3$ & 11a320:&$-46/3$, $-22/3$ & 12a177:&$10/3$, $22/3$\\
11a125:&$-34/3$ & 11a321:&$20/3$ & 12a186:&$34/3$\\
11a127:&$-40/3$ & 11a323:&$26/3$ & 12a188:&$2/3$\\
11a129:&$40/3$ & 11a326:&$22/3$, $28/3$ & 12a211:&$2/3$\\
11a130:&$-34/3$ & 11a329:&$-4/3$ & 12a222:&$-23/2$\\
11a136:&$-34/3$ & 11a345:&$-10/3$ & 12a223:&$-10/3$\\
11a147:&$-34/3$ & 11a349:&$34/3$ & 12a224:&$-27/2$\\
11a151:&$34/3$ & 12a45:&$26/3$ & 12a233:&$-23/2$\\
11a152:&$-40/3$ & 12a46:&$7/2$, $22/3$ & 12a264:&$-40/3$\\
11a156:&$-4/3$ & 12a52:&$-52/5$ & 12a267:&$-28/3$\\
11a157:&$40/3$ & 12a53:&$-32/3$ & 12a276:&$-20/3$\\
11a158:&$28/3$ & 12a57:&$16/3$ & 12a284:&$4/3$\\
11a162:&$34/3$ & 12a59:&$-8/3$ & 12a292:&$22/3$\\
11a164:&$34/3$ & 12a63:&$-8/3$ & 12a293:&$-14/3$\\
11a168:&$-10/3$ & 12a65:&$26/3$ & 12a294:&$34/3$\\
11a169:&$22/3$ & 12a70:&$34/3$, $46/3$ & 12a296:&$-8/3$\\
11a171:&$-34/3$ & 12a72:&$34/3$, $46/3$ & 12a301:&$-22/3$\\
11a217:&$40/3$ & 12a88:&$-52/3$, $-8/3$, $68/5$ & 12a309:&$22/3$\\
11a218:&$2/3$ & 12a89:&$34/3$ & 12a311:&$-76/3$\\
11a227:&$-2/3$ & 12a91:&$-58/3$, $-8/3$ & 12a315:&$-4/5$, $4/3$\\
11a233:&$28/3$ & 12a93:&$-48/5$ & 12a316:&$36/5$, $28/3$\\
11a239:&$22/3$ & 12a94:&$-32/3$ & 12a317:&$10/3$\\
11a244:&$-2/3$ & 12a100:&$7/2$ & 12a318:&$46/5$, $34/3$\\
11a249:&$-20/3$ & 12a101:&$78/5$ & 12a319:&$-13/2$\\
11a251:&$-4/3$ & 12a102:&$-32/3$, $28/5$ & 12a320:&$-52/3$, $-4/3$\\
11a253:&$-4/3$ & 12a105:&$-8/3$ & 12a321:&$-40/3$, $-26/3$, $-8/3$\\
11a255:&$34/3$ & 12a107:&$-32/3$, $28/5$ & 12a334:&$16/3$\\
11a256:&$-40/3$, $-20/3$ & 12a108:&$-52/3$ & 12a337:&$22/3$\\
11a272:&$-10/3$ & 12a109:&$7/2$ & 12a339:&$-40/3$, $-16/3$\\
11a273:&$22/3$ & 12a111:&$-58/3$, $-8/3$ & 12a340:&$-64/3$\\
11a274:&$-28/3$, $34/3$ & 12a115:&$78/5$ & 12a344:&$-52/3$, $-27/2$\\
\end{tabular}

    \end{center}
}

\vspace{1cm}
\caption{Some nonintegral boundary slopes of alternating knots,
  numbered as in \cite{Knotscape}; the first three are $10_{80},
  10_{79}$, and $10_{106}$ in the standard table \cite{Rolfsen1990}.
  These were proven to exist by Theorem~\ref{thm-one-cusp} using
  triangulations with 14--23 tetrahedra.   }\label{table-alt}
\end{table}

To apply Theorem~\ref{thm-one-cusp}, we need to specify an ideal
triangulation $\T$ with a spun-normal surface $S$ and check:
\begin{enumerate}
\item \label{it-same-mfld}
  The ideal triangulation $\T$ is homeomorphic to the complement of
  $10_{79}$ and the peripheral basis that comes with $\T$ is the
  standard homological one.

\item  \label{it-full-vertex}
  The surface $S$ is a vertex surface with a quad in every
  tetrahedron.  In the reformulation of
  Theorem~\ref{thm-one-cusp-tech}, the former is equivalent to $\dim
  L(\T, Q) = 1$, where $Q$ is the quad type determined by $S$.
  
\item \label{it-bs} The boundary slope of $S$ is $10/3$, which can be
  done as described in Section~\ref{sub-ideal-ess}.
\end{enumerate}
The triangulation $\T$ we use has $14$ tetrahedra and is given in the
file ``10\_79-certificate.tri'' available at \cite{CertTris}.  The surface $S$ has the
same quad type in each tetrahedron, namely the one disjoint from the edges
$01$ and $23$ in Figure~\ref{fig-edge-param}, which also corresponds
to the shape degeneration $z \to 0$.  The number of quads is given by
\[
S = (2, 3, 3, 3, 2, 5, 2, 1, 4, 1, 3, 1, 3, 3) \in \R^{14}_Q
\]
Now (\ref{it-same-mfld}) above is easily checked using SnapPy \cite{SnapPy}.
The information needed for (\ref{it-full-vertex}-\ref{it-bs}) comes
directly from the $A$ part of the matrix $M(\Lambda)$ describing the gluing
equations for $\T$ together with the corresponding part of the cusp
equations.  Explicitly, using SnapPy within Sage \cite{Sage} as 
shown in Table~\ref{tab-code} suffices to confirm Theorem \ref{thm-alt-nonint}.  
\begin{table}
\begin{center}
\begin{minipage}[b]{0.8 \textwidth}
{ \small
\begin{verbatim}
sage: from snappy import *
sage: M = Manifold("10_79-certificate.tri")
sage: N = Manifold("10_79")
sage: M.is_isometric_to(N, return_isometries=True)[1]
0 -> 0
[1 0] 
[0 1] 
Extends to link
sage: data = M.gluing_equations(form="rect")
sage: gluing_data, cusp_data = data[:-2], data[-2:]
sage: A = matrix( [e[0] for e in gluing_data] )
sage: B = matrix( [e[1] for e in gluing_data] )
sage: c = matrix( [ [e[2]] for e in gluing_data] )
sage: cusp_holonomy_A_part = matrix( [e[0] for e in cusp_data] )
sage: L = A.right_kernel(); L
Free module of degree 14 and rank 1 over Integer Ring
Echelon basis matrix:
[2 3 3 3 2 5 2 1 4 1 3 1 3 3]
sage: S = L.basis()[0]
sage: cusp_holonomy_A_part * S
(-3, 10)
\end{verbatim} }
\end{minipage}
\end{center}

\vspace{1ex}

\caption{Checking Theorem \ref{thm-alt-nonint} using SnapPy
  \cite{SnapPy} within Sage \cite{Sage}.}
\label{tab-code}
\end{table}

\section{Dehn filling}\label{sec-filling}

We turn to the case of a \3-manifold $W$ where $\partial W$ consists
of several tori $T_0, T_1, \ldots, T_b$.  For $k > 0$, we pick a slope
$\gamma_k$ on $T_k$.  If we fix an ideal triangulation $\T$ of $W$, we
can consider all spun-normal surfaces $S$ whose boundary slope on
$T_k$ is either $\gamma_k$ or $\emptyset$.  Equivalently, we consider
surfaces $S$ where the geometric intersection of $\gamma_k$ with $S
\cap T_k$ is 0.  By the discussion in Section~\ref{sub-ideal-ess}, for
each $k$ this requirement imposes an additional linear condition on
the cone $C(\T)$ of spun-normal surfaces.  We call the resulting subcone
$C(\T, \{\gamma_k\})$ the \emph{relative normal surface space
  corresponding to $(\, \cdot \, , \gamma_1,\ldots,\gamma_b)$}.  This
section is devoted to:
\begin{maintwobackwards}
  Let $W$ be a compact oriented \3-manifold whose boundary consists of
  tori $T_0, T_1, \ldots , T_b$.  Let $S$ be a spun-normal surface in an
  ideal triangulation $\T$ of $W$, with nonempty boundary slope $\gamma_k$
  on each $T_k$. Suppose that $S$ has a quadrilateral in every
  tetrahedra of $\T$, and is a vertex surface of $C(\T, \{\gamma_1,\ldots,\gamma_b\})$.
  Then $\gamma_0$ is a boundary slope of $W(\cdot, \gamma_1,\ldots, \gamma_b)$.
\end{maintwobackwards}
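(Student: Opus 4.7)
The plan is to adapt the strategy behind Theorem~\ref{thm-one-cusp}, replacing the role of the full deformation variety $D(\T)$ with a constrained subvariety that corresponds to Dehn filling the cusps $T_1, \dots, T_b$ along the slopes $\gamma_1, \dots, \gamma_b$. First I would introduce the analogue of $L(\T,Q)$ and $C(\T,Q)$ for the relative setting: let $Q$ be the quad type determined by $S$, and let $L(\T, Q, \{\gamma_k\}) \subset \R^n_Q$ be the subspace cut out by the $Q$-matching equations together with the linear conditions that the geometric intersection with $\gamma_k$ vanishes for $k \geq 1$. Repeating the argument that derives Theorem~\ref{thm-one-cusp} from Theorem~\ref{thm-one-cusp-tech}, the hypothesis that $S$ is a vertex of $C(\T,\{\gamma_1,\dots,\gamma_b\})$ with a quad in every tetrahedron is equivalent to $\dim L(\T,Q,\{\gamma_k\}) = 1$.

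Next I would package the Dehn filling as extra linear data on the gluing equation variety. As in Example~\ref{ex-DT}, choosing a preferred edge in each tetrahedron (the one with shift $+1$ under $Q$) presents $D(\T)$ as $V(\Lambda)$ for some lattice $\Lambda$. The cusp holonomy of a slope $\gamma_k$ on $T_k$ is a monomial $h(\gamma_k) = \prod z_i^{a_i}(1-z_i)^{b_i}(-1)^c$, and requiring $h(\gamma_k) = 1$ (so that $\gamma_k$ bounds a disc in the filling solid torus) amounts to enlarging $\Lambda$ by the corresponding row vectors, yielding an enlarged lattice $\Lambda'$ whose upper-left submatrix $A'$ has rank $n-1$; this is precisely the reformulation of $\dim L(\T,Q,\{\gamma_k\}) = 1$ in the degeneration-vector language of Remark~\ref{rem-deg-spun}. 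The $Q$-coordinates of $S$ give a totally positive degeneration vector $\dd$ in $\ker(A')$, so Proposition~\ref{prop-Kabaya-key} applies and $\dd$ is genuine: there is a holomorphic arc $f \colon (D,0) \to \Vbar(\Lambda')$ with $f(D \setminus \{0\}) \subset V(\Lambda')$ realizing $\dd$.

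From this ideal point $\xi$ of $V(\Lambda')$ I would extract an ideal point of the character variety of the filled manifold $M = W(\cdot, \gamma_1, \dots, \gamma_b)$. Since the holonomies $h(\gamma_k)$ are identically $1$ along the arc, the holonomy representations $\rho_t$ produced by Lemma~\ref{lem-dev-exists} all kill $\gamma_k$ and so descend to representations of $\pi_1(M)$. The resulting curve in $\Xbar(M)$ has $\xi$ as an ideal point, because the holonomy $h(\beta)$ of a curve $\beta \subset T_0$ meeting $\gamma_0$ once has a pole at $\xi$: this is the same computation as in Section~\ref{sub-ideal-ess}, using that the order of $h(\beta)$ at $\xi$ is the algebraic intersection of $\beta$ with $\partial S$ on $T_0$, which is nonzero by hypothesis. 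The associated spun-normal surface $S(\xi)$ in $\T$ is a positive multiple of $S$, hence still has slope $\gamma_0$ on $T_0$.

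Finally I would invoke a Dehn-filled analogue of Theorem~\ref{thm-ideal-pt-to-surface} to conclude. Inside $W$ the surface $S$ can be reduced, just as in the proof of Theorem~\ref{thm-ideal-pt-to-surface}, to a (possibly empty on $T_k$ for $k \geq 1$) essential surface in $W$; gluing in meridian discs of the filling solid tori along the components of $\partial S$ lying on $T_1, \dots, T_b$ and then further reducing in $M$ produces an essential surface in $M$. The Bass–Serre tree argument from Section~\ref{sub-ideal-spun}, applied to the dual tree $\TN$ mapped equivariantly to the tree of the ideal point in $\Xbar(M)$, shows that the loxodromic action of $\beta \in \pi_1(T_0)$ persists through all these reductions, so the final essential surface has nonempty boundary on $T_0$ of slope $\gamma_0$. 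The main technical obstacle is this last step: one must carefully verify that the cap-off/normalization process in $M$ does not eliminate $T_0$-boundary and that the slope is preserved, which requires adapting Tillmann's equivariant map $\Ntil \to \TN$ in \cite{Till2} to the filled manifold and checking that the filling solid tori can be absorbed without destroying the loxodromic action on the Bass–Serre tree associated to $\xi$.
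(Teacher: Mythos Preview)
Your setup matches the paper's: enlarge the lattice by the holonomy conditions $h(\gamma_k)=1$, observe that the vertex-surface hypothesis forces $\rank(A')=n-1$, and apply Proposition~\ref{prop-Kabaya-key} to the totally positive degeneration vector coming from $S$. That part is fine.

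There is a genuine gap in your third paragraph. From $h(\gamma_k)\equiv 1$ along the arc you conclude that the representations $\rho_t$ ``kill $\gamma_k$'' and hence descend to $\pi_1(M)$. This inference is wrong: the cusp holonomy $h(\gamma_k)$ records only the eigenvalue (squared) of $\rho_t(\gamma_k)$, so $h(\gamma_k)=1$ allows $\rho_t(\gamma_k)$ to be a nontrivial parabolic. The paper addresses exactly this point. It uses the hypothesis that $S$ has nonempty boundary slope $\gamma_k$ on \emph{every} $T_k$, not just $T_0$: choosing $\alpha_k\in\pi_1(T_k)$ dual to $\gamma_k$, the order of $h(\alpha_k)$ at the ideal point equals the nonzero intersection number of $\alpha_k$ with $\partial S$, so $h(\alpha_k)$ has a genuine pole or zero there. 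Shrinking the disc so that $h(\alpha_k)\neq 1$ on the punctured disc, one gets that $\rho_t(\alpha_k)$ has distinct eigenvalues; since $\rho_t(\gamma_k)$ commutes with it and has eigenvalue $\pm 1$, it must be trivial in $\PSL{2}{\C}$. Only then do the representations factor through $\pi_1(M)$.

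Your final paragraph is also much harder than what is needed, and you rightly flag it as incomplete. The theorem only asserts that $\gamma_0$ is a boundary slope of $M$, not that any particular capped-off descendant of $S$ is essential. Once the arc lands in $\Xbar(M)$ and $\tr(\rho(\alpha_0))$ has a pole at the ideal point (with $\tr(\rho(\gamma_0))=\pm 2$), the Culler--Shalen machinery directly produces an essential surface in $M$ with boundary slope $\gamma_0$. The paper stops there; there is no need to cap off $S$, normalize in $M$, or adapt the equivariant map $\Ntil\to\TN$ to the filled manifold.
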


\begin{proof}
  We consider the relative gluing equation variety $D(\T,
  \{\gamma_k\})$ obtained from adding the $b$ conditions that the
  holonomy $h(\gamma_k)$ of each $\gamma_k$ is 1.  For the Dehn filled
  manifold $M = W( \, \cdot \, ,\gamma_1,\ldots,\gamma_b)$, the
  relative variety $D(\T, \{\gamma_k\})$ is closely related to the
  character variety $\Xbar(M)$.  However, while every point in $D(\T,
  \{\gamma_k\})$ gives a representation $\rho \maps \pi_1(W) \to
  \PSL{2}{\C}$, these representations do not all factor through
  $\pi_1(M)$; the condition $h(\gamma_k) = 1$ only gives that
  $\rho(\gamma_k)$ is trivial or parabolic.  However, $\rho(\gamma_k)$
  can only be nontrivial if $h(\alpha) = 1$ for every element $\alpha
  \in \pi_1(T_k)$.

  As in Section~\ref{rem-deg-spun}, we take the preferred shape
  parameter $z$ in a tetrahedron to be the one where the quad of $S$
  has shift $+1$.  Then following Section~\ref{ex-DT}, we consider the
  variety $V = V(\Lambda)$ arising from $D(\T, \{\gamma_k\})$ by
  focusing on the preferred shape parameters.  If $\T$ has $n$
  tetrahedra, then the rank of $\Lambda$ is at most $n-1$ since there
  are $n - b - 1$ equations coming from $D(\T)$ (by
  Section~\ref{ex-DT}) and also one equation for each condition
  $h(\gamma_k) = 1$ (by Section~\ref{sub-ideal-ess}).  Thus $V$ is
  indeed a variety of the kind studied in
  Section~\ref{sec-ideal-gluing}.  Just as in
  Section~\ref{rem-deg-spun}, the degeneration vectors for $V$ are
  precisely the spun-normal surfaces in the relative space $C(\T,
  \{\gamma_k\})$.  Thus we can apply Proposition~\ref{prop-Kabaya-key}
  to see that the surface $S$ is associated to an ideal point $p$ of
  $V$.  Let $f \maps (D, 0) \to (\Vbar, p)$ be an associated
  holomorphic map.  For each $k \geq 0$, pick a curve $\alpha_k$ on
  $T_k$ which meets $\gamma_k$ in one point.  Then as $\gamma_k$ is
  the boundary slope of $S$, the function $h(\alpha_k) \circ f$ has a
  nontrivial pole or zero at $0$.  In particular, we can restrict the
  domain $D$ of $f$ so that $h(\alpha_k) \neq 1$ on $f(D \setminus \{
  0 \})$.  Then every point in $f(D \setminus \{ 0 \})$ gives rise to
  a representation of $\pi_1(M)$.  Thus we have found an ideal point
  $\xi$ of $\Xbar(M)$ where $\tr \left(\rho(\alpha_0)\right)$ has a
  pole and $\tr\left(\rho(\gamma_0)\right) = \pm 2$.  The essential
  surface associated to $\xi$ has boundary slope $\gamma_0$, as
  needed.
\end{proof}

\section{The 2-fusion link}
\label{sec.thmLm1m2}

Let $W$ be the complement of the link in Figure
\ref{fig-2fusion-link}.  The manifold $W$ has a hyperbolic structure
obtained by gluing two regular ideal octahedra.  We consider a certain
ideal triangulation $\T$ of $W$ with 8 tetrahedra described in the file
``2fusion-certificate.tri'' available at \cite{CertTris}.  As in
Section~\ref{sec-alt}, we look at surfaces with the same quad type in
each tetrahedron, the one which corresponds to the shape degeneration $z \to
0$, and use $Q$ to denote this choice of quads.

One finds that the first  part of the matrix $M(\Lambda)=(A|B|c)$ is 
\[
\scriptsize
A = \left(\begin{array}{rrrrrrrr}
1 & 0 & -1 & 0 & 0 & 0 & 0 & 0 \\
0 & -1 & 0 & 1 & 0 & 0 & 0 & 1 \\
-1 & 1 & 1 & -1 & 0 & -2 & 0 & 0 \\
0 & 0 & 0 & 0 & 0 & 0 & 0 & 0 \\
0 & 0 & -1 & 0 & 0 & 2 & 1 & -1 \\
0 & 0 & 0 & 1 & -1 & 0 & 0 & 0 \\
0 & 0 & 1 & -1 & 1 & 0 & -1 & 0 \\
0 & 0 & 0 & 0 & 0 & 0 & 0 & 0
\end{array}\right)
\]
which has rank 5.
Three vectors which span $\ker A = L(\T, Q)$ are
\begin{align*}
  S_1 &= \big(0,1,0,1,1,0,0,0\big) \\
  S_2 &= \big(0, 2, 0, 0, 0, 1, 0, 2\big)\\
  S_3 &= \big(1, 0, 1, 0, 0, 0, 1, 0\big)
\end{align*}
Thus on $L(\T, Q) = \{ a_1 S_1 + a_2 S_2 + a_3 S_3\}$, the condition
defining $C(\T,Q)$ that the original variables satisfy $u_k \geq 0$
translates into having each $a_k \geq 0$.   Hence $C(\T, Q)$ is simply the positive
orthant in $L(\T, Q)$ with respect to the basis $\{S_1, S_2, S_3\}$.  So we
can identify the projective solution space $P(\T, Q)$ with the
triangle spanned by the vertex surfaces $S_k$.  

Now with the peripheral basis curves ordered $(\mu_0, \lambda_0, \mu_1, \lambda_1,
\mu_2, \lambda_2)$, the $A$ part of the matrix specifying the cusp equations
is
\[
\scriptsize
\left(\begin{array}{rrrrrrrr}
0 & -1 & 0 & 0 & 0 & 0 & 0 & 0 \\
-1 & 2 & 0 & 0 & 0 & 0 & 0 & 0 \\
0 & 0 & 0 & 0 & 0 & 0 & -1 & -1 \\
0 & 0 & 0 & 0 & 0 & 0 & 0 & -1 \\
0 & 0 & 1 & 0 & 0 & 0 & 0 & 0 \\
0 & 1 & 0 & 0 & 0 & 0 & 0 & -1
\end{array}\right)
\]
and hence the boundary slopes of each of our vertex surfaces is 
\begin{equation}\label{eq-W-bs}
\begin{array}{rcccc}
 & & T_0  & T_1  & T_2 \\ \hline
\partial S_1 &: &2 \mu_0 + \lambda_0  & \emptyset & \mu_2 \\
\partial S_2 &: &4 \mu_0 + 2\lambda_0& -2 \mu_1 + 2 \lambda_1& \emptyset \\
\partial S_3 &: &-\mu_0& \lambda_1& -\lambda_2
\end{array}
\end{equation}
We will show
\begin{proposition}\label{prop-W-bs}
  The surface $S = a_1 S_1 + a_2 S_2 + a_3 S_3$ for $a_k \in \N$ has non-empty boundary
  slopes $\gamma_0, \gamma_1, \gamma_2$ on each boundary torus $T_k$, and $\gamma_0$
  is a boundary slope of $M = W(\, \cdot \, , \gamma_1, \gamma_2)$.
\end{proposition}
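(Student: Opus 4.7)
The plan is to verify the three hypotheses of Theorem~\ref{thm-filling} for $S = a_1 S_1 + a_2 S_2 + a_3 S_3$ and then invoke that result directly. I begin by reading the boundaries of $S$ off \eqref{eq-W-bs} by linearity:
\begin{align*}
\gamma_0 &= (2a_1 + 4a_2 - a_3)\,\mu_0 + (a_1 + 2a_2)\,\lambda_0 \text{ on } T_0,\\
\gamma_1 &= -2a_2\,\mu_1 + (2a_2 + a_3)\,\lambda_1 \text{ on } T_1,\\
\gamma_2 &= a_1\,\mu_2 - a_3\,\lambda_2 \text{ on } T_2.
\end{align*}
Since each $a_k \geq 1$, the $\lambda$-coefficient in each line is nonzero (strictly positive on $T_0, T_1$ and strictly negative on $T_2$), so every $\gamma_k$ is a nonzero homology class and hence a genuine, non-empty boundary slope.

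Next I confirm that $S$ has a quadrilateral in every tetrahedron of $\T$ by adding the listed coordinate vectors to find that the $Q$-coordinates of $S$ are $(a_3,\, a_1 + 2a_2,\, a_3,\, a_1,\, a_1,\, a_2,\, a_3,\, 2a_2)$. Every entry is strictly positive under the hypothesis $a_k \geq 1$. As a bonus, since $S$ has weight zero on every non-$Q$ quad, and any spun-normal summand must have nonnegative weight on every quad, any decomposition of $S$ into nonzero spun-normal surfaces takes place entirely inside $C(\T,Q)$.

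The final hypothesis to verify is that $S$ is a vertex surface of the relative cone $C(\T, \{\gamma_1,\gamma_2\})$. Since $L(\T,Q)$ is three-dimensional with basis $\{S_1, S_2, S_3\}$, and by the preceding paragraph the relative cone restricted to $C(\T,Q)$ is all that matters, it suffices to show that imposing the two slope conditions (boundary parallel to $\gamma_1$ on $T_1$ and to $\gamma_2$ on $T_2$) cuts $L(\T,Q)$ down to the line spanned by $S$. For a test vector $T = b_1 S_1 + b_2 S_2 + b_3 S_3$, matching the $T_1$-slope requires $(-2 b_2)(2 a_2 + a_3) = (2 b_2 + b_3)(-2 a_2)$, which collapses to $a_2 b_3 = a_3 b_2$; the analogous $T_2$ condition gives $a_1 b_3 = a_3 b_1$. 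Together these force the ratios $b_k/a_k$ to coincide for $k = 1, 2, 3$, so $T$ is proportional to $S$. Thus $\R_+ \cdot S$ is an extreme ray of $C(\T, \{\gamma_1, \gamma_2\})$, and Theorem~\ref{thm-filling} yields that $\gamma_0$ is a boundary slope of $M = W(\,\cdot\,, \gamma_1, \gamma_2)$. No substantive obstacle is expected; the argument reduces to linear algebra against the explicit matrices supplied by the triangulation and the cusp equations.
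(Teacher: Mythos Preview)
Your proof is correct and follows essentially the same approach as the paper's: verify the nonempty boundary on each $T_k$ from the $\lambda$-coefficients, check that $S$ is a vertex surface of the relative cone by showing the two slope constraints on $T_1$ and $T_2$ cut $L(\T,Q)$ down to the line through $S$, and then invoke Theorem~\ref{thm-filling}. You are simply more explicit than the paper in writing out the linear algebra and in justifying why one may restrict attention to $C(\T,Q)$; the paper phrases the vertex-surface step as injectivity of the projectivized boundary-slope map on the simplex spanned by the $S_k$, which is the same computation.
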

\begin{proof}
  Since all $a_k> 0$, it is clear from (\ref{eq-W-bs}) that $\partial S$ has
  nontrivial coefficients along each $\lambda_k$ and so has non-empty
  boundary slope $\gamma_k$ on each $T_k$.  Consider the boundary slope
  map from the convex hull of the $S_k$ to the space $\R^4 = H_1(T_1;
  \R) \oplus H_1(T_2; \R)$.  From (\ref{eq-W-bs}), it is clear this is
  injective, even if we projectivize the image.  Thus, the relative
  normal surface space $C(\T, \{\gamma_1, \gamma_2\})$ is just the ray
  generated by $S$, and so $S$ is a vertex surface for $C(\T,
  \{\gamma_1, \gamma_2\})$.  Hence we can apply Theorem~\ref{thm-filling}
  to see that $\gamma_0$ is a boundary slope of $M$.
\end{proof}

We now prove Theorem~\ref{thm.Lm1m2} by considering the surface
\[
S = 2(m_1-1)  S_1  + m_2  S_2 + 2(m_1-1)m_2  S_3
\]
for some $m_1 > 1$ and $m_2 > 0$.  This surface has boundary slopes as
follows, written as elements of $\Q$:
\[
\gamma_0 = -\frac{{{\left(m_{1} - 3\right)} m_{2} - 2 \, m_{1} + 2}}{{m_{1} + m_{2} - 1}}, \quad \gamma_1 = -\frac{1}{m_{1}}, \quad \gamma_2 = -\frac{1}{m_{2}}
\]
Thus by Proposition~\ref{prop-W-bs}, the slope $\gamma_0$ above
is a boundary slope for
\[
M = W\big(\, \cdot \, , -1/m_1, -1/m_2 \big)
\]
Now, the manifold $M$ is the exterior of the knot $L(m_1, m_2)$ from
Theorem~\ref{thm.Lm1m2}, but the peripheral basis $\{\mu_0, \lambda_0 \}$ is
the one that comes from $W$, and so $\lambda_0$ is \emph{not} the homological
longitude $\lambda'_0$for $M$.  As the components $C_1$ and $C_2$ are
unlinked, we can adjust for this via
\[
\lambda_0' = \left(-m_1 \cdot \mathrm{lk}(C_0, C_1)^2 -m_2 \cdot \mathrm{lk}(C_0, C_2)^2\right) \mu_0 + \lambda_0 = (-4 m_1 - 9 m_2) \mu_0 + \lambda_0
\]
and thus find that in the usual homological basis
\[
\gamma'_0 = (4 m_1 + 9 m_2)  + \gamma_0 = 3 (m_1 + 1) + 9 m_2 + \frac{(m_1-1)^2}{(m_1 + m_2-1)}
\]
is a boundary slope of $L(m_1, m_2)$, proving Theorem~\ref{thm.Lm1m2}.   

\section{Which spun-normal surfaces come from ideal points?}
\label{sec-which-come-from-ideal}

Given an ideal triangulation $\T$ of a \3-manifold $M$ with one torus
boundary component, we would like to determine all the boundary slopes
that arise from ideal points of $D(\T)$.  Of course, one can find all
such detected slopes by computing the $A$-polynomial, but this is
often a very difficult computation, involving projecting an algebraic
variety (i.e.~eliminating variables).  While Culler has a clever new
numerical method for such computations \cite{Culler}, there are still
9 crossing knots whose $A$-polynomials have not been computed.

For a spun-normal surface $S$ with nonempty boundary, we have an
effectively checkable condition (Lemma~\ref{lem-gen-char}) which is
necessary and sufficient for $S$ be associated to an ideal point of
$D(\T)$.  However, since there are typically infinitely many
spun-normal surfaces, this does not allow for the computation of all
such detected slopes unless we can restrict to a finite set of
surfaces.  From the point of view of normal surface theory, two
natural finite subsets are:
\begin{enumerate}
\item The vertex surfaces introduced in Section~\ref{sec-spun-normal}.
\item The larger set of fundamental surfaces, which are the integer
  points in $C(\T)$ which are not proper sums of other such points.
\end{enumerate}
However, we show below that neither of these subsets suffices.  In
fact, there is a geometric triangulation $\T$ of the complement of the
knot $6_3$ where \emph{none} of the 22 fundamental surfaces is
associated with an ideal point of $D(\T)$!

Independent of this issue, we've also seen three conditions which
ensure that a surface $S$ is associated to an ideal point of $D(\T)$:
\begin{enumerate}
\item \label{it-kab}
  Kabaya's original criterion, Proposition~\ref{prop-Kabaya-key},
  which was used in proving Theorem~\ref{thm-one-cusp}.  This requires
  that $S$ is a vertex surface and has a quad in every tetrahedron.

\item \label{it-W0}
  Lemma~\ref{thm.genuine2} applies when the first-order system
  $W_0(\Lambda, d)$ has dimension 0, where $d$ is the degeneration vector
  corresponding to $S$.

\item \label{it-W0bar} 
  Lemma~\ref{lem-gen-char} applies when $\Wbar_0(\Lambda, d)$ is nonempty.
\end{enumerate}
Here, each condition implies the next, and (\ref{it-W0bar}) is
necessary as well as sufficient.  Condition (\ref{it-W0}) is easier to
check than (\ref{it-W0bar}) as it only needs the dimension of a
variety, which is one of the easiest tasks for Gr\"obner bases.  In
contrast, (\ref{it-W0bar}) requires eliminating a variable, albeit one
that appears only in fairly simple equations, and thus
(\ref{it-W0bar}) is still much easier than finding the $A$-polynomial
using Gr\"obner bases.  For manifolds with less than 20 tetrahedra,
tests (a) and (b) are usually quite feasible for any given surface.
However, a naive Gr\"obner basis approach to applying (c) sometimes
failed even for manifolds with less than 10 tetrahedra.

\subsection{Experimental Data}

There are 173 Montesinos knots with $< 11$ crossings.  As we know the
boundary slopes for these \cite{HO, DunfieldMontesinos}, we tested the
three methods above on each of them, using triangulations with between
2 and 15 tetrahedra.  These knots have an average of 6.1 boundary
slopes, but the method (\ref{it-kab}) yields an average of only 1.2
slopes, or about 20\% of the total.  When (\ref{it-W0}) is applied to
all vertex surfaces, it finds an average of 3.8 slopes, or about 64\%
of the actual number of slopes.  The third test (\ref{it-W0bar}) was
not practical on enough of these vertex surfaces to give any real
data.

When the manifolds were ordered by the size of their triangulations,
the number of slopes found by (\ref{it-kab}) decreased (in absolute
and relative terms) as the number of tetrahedra increased.  A more
marked variant of this pattern was observed in punctured torus
bundles; when the triangulations were small there was an average of
1.0 slope found with (\ref{it-kab}), but when there were 15 tetrahedra the
average had dropped to below < 0.1. 

Also for punctured torus bundles, method (\ref{it-W0}) always found
exactly two slopes.  Henry Segerman pointed out to us that these are
the two surfaces corresponding to the edges of the Farey strip in
\cite{FloydHatcher}.  This can be deduced from \cite{Segerman2011}
where the solutions to the tilde equations of Section 8 are closely
related to our $W(\Lambda,d)$.  Another interesting observation of
Segerman is the following simple way that (\ref{it-W0}) that can fail.
If the detected surface $S$ has a tube of quads encircling an edge,
then all the edge parameters around it are 1 at the ideal point.  Thus
the equation (\ref{eq-first-order}) for that edge is simply $1 = 1$
and so the dimension of $W_0(\Lambda, d)$ will be at least one if it
is not empty, and hence (\ref{it-W0}) will not apply.  Of course, such
an $S$ has an obvious compression from the tube of quads (which is
typically a genuine compression), but for the examples in
\cite{Segerman2011} the spun-normal surfaces associated to ideal
points frequently do have such tubes.

\subsection{The knot $6_3$}\label{sec-funny-knot}

We illustrate some of the subtleties of these questions with the
complement $M$ of the hyperbolic knot $6_3$ in $S^3$.  Using that this
is the two-bridge knot $K(5/13)$, one finds that the boundary slopes
are: $-6, -2, 0, 2, 6$.  (The symmetry here comes from the fact that
$M$ is amphichiral.)

From the $A$-polynomial, we see that the character variety $\Xbar(M)$
has a single irreducible component (excluding the component of
reducible representations).  All boundary slopes are strongly detected
on $\Xbar(M)$, with the exception of $0$ which com1es from a fibration
of $M$ over the circle.  We focus on the boundary slope 2, which is
associated to a unique ideal point of $\Xbar(M)$.

From now on, let $\T$ be the canonical triangulation of $M$ as saved
in ``6\_3-canon.tri'' available at \cite{CertTris}.  It has 6
tetrahedra, which come in three different shapes.
\begin{itemize}
  \item Tets 0 and 2 have the same shape, which is an isosceles triangle.
  \item Tets 1 and 3 have the same shape, which has no symmetries.
  \item Tets 4 and 5 have the same shape, which is the mirror image of
    those of tets 1 and 3.
\end{itemize}
All of this is compatible with the fact that the isometry group of $M$
is the dihedral group with eight elements.  It turns out that there are 16
spun-normal vertex surfaces, all of which have non-trivial boundary
slopes, and also 4 other fundamental surfaces.

Four vertex surfaces have slope $2$, all of which are
compatible with a single quad-type
\[
	Q = [Q03, Q13, Q13, Q03, Q13, Q13]
\]
and have weights
\begin{align*}
 S_8  &= [0, 1, 2, 0, 1, 1]  &  S_{10} &= [2, 1, 0, 0, 1, 1] \\
 S_9  &= [0, 0, 2, 1, 1, 1]  &  S_{11} &= [2, 0, 0, 1, 1, 1]
\end{align*}
While each of these vertex surfaces has exactly one boundary
component, they differ in the direction the surface is spun out to the
boundary.  The surfaces $S_8$ and $S_9$ are spun one way, and $S_{10}$
and $S_{11}$ are spun the other.  Additionally, there are two other
fundamental surfaces in this component of $F(\T)$:
\begin{align*}
  S_a  &= (1/2) \big( S_8 + S_{10} \big) = [1, 1, 1, 0, 1, 1]  \\
  S_b  &= (1/2) \big( S_9 + S_{11} \big) = [1, 0, 1, 1, 1, 1]
\end{align*}

\subsection{Oddity the first} The surfaces $S_8$ and $S_{10}$ are
compatible and each has nonempty boundary, but $S_8 + S_{10}$ is
actually a closed surface.  In fact, it's the double of $S_a$ which
has genus 2 and is just the boundary torus plus a tube linking the
edge $e_3$.  This is in stark contrast with the non-spun case, where
compatible normal surfaces with boundary always sum to a surface with
nonempty boundary.  (This is because the two surfaces lie on a common
branched surface.)  Thus this is a potential problem for proving that
all boundary slopes can be determined solely by looking at the
spun-normal vertex surfaces.

\subsection{Oddity the second} None of the 22 fundamental surfaces arise
from an ideal point of the gluing equation variety $D(\T)$.  For
instance, for the surfaces with slope 2, chose the preferred edge
parameters so that $z_i \to 0$ corresponds to the quad in $Q$.  Then
the gluing equations include $z_1 = z_3$ and $z_4 = z_5$; the former
is not compatible with any of the above fundamental surfaces.
Instead, after some work it turns out that Lemma~\ref{lem-gen-char}
shows that the surfaces
\begin{equation}\label{eq-surfaces}
\begin{split}
	S =  S_{8\hphantom{0}}+ S_b &=  S_{9\hphantom{1}} + S_a = [1, 1, 3, 1, 2, 2]  \\ 
        S' = S_{10} + S_b &= S_{11} + S_a = [3, 1, 1, 1, 2, 2] 
\end{split}
\end{equation}
are associated to the two ideal points of $D(\T)$ which detect the
slope +2.  (These two ideal points map to the same ideal point of
$\Xbar(M)$, and differ in the direction the associated surfaces are spun
out toward the boundary.)

A posteriori, the failure of the fundamental surfaces to appear at
ideal points of $D(\T)$ is not so surprising given the large symmetry
group $G$ of $\T$.  The four vertex surfaces above are the vertices of
a tetrahedron $\Delta$ in the projectivized space $\mathit{PF}(\T)$ of
embedded spun-normal surfaces. The subgroup $H$ of $G$ which preserves
$\Delta$ is isomorphic to $\Z/2 \oplus \Z/2$ and acts transitively on
the vertices of $\Delta$ by orientation preserving symmetries.  Now
$D(\T)$ has two ideal points with slope 2 and there is a unique
non-trivial element $g$ of $H$ which fixes both; this $g$ acts by $S_8
\leftrightarrow S_9$ and $S_{10} \leftrightarrow S_{11}$
(i.e.~interchanges the pairs of surfaces that spin in the same
direction).  Thus a surface associated with either ideal point must
lie on the line segment joining $(1/2)(S_8 + S_9)$ to $(1/2)(S_{10} +
S_{11})$ and hence can not be a fundamental surface.  However, since we
also know that $g$ interchanges $S_a$ and $S_b$, we can see that the
surfaces $S$ and $S'$ in (\ref{eq-surfaces}) will indeed be fixed by $g$.  

{\RaggedRight \bibliographystyle{math-1.6} \bibliography{2fusion.bib} }

\end{document}